\theoremstyle{definition}
\newtheorem{definiz}{Definition}[section]
\newtheorem{oss}[definiz]{Remark}
\theoremstyle{plane}
\newtheorem{teorema}[definiz]{Theorem}
\newtheorem{lemma}[definiz]{Lemma}
\newcommand{\mbb}{\mathbb}
\newcommand{\mc}{\mathcal}
\newcommand{\veps}{\varepsilon}
\newcommand{\what}{\widehat}
\newcommand{\wtilde}{\widetilde}
\newcommand{\vphi}{\varphi}
\title{\Large{\bfseries{A NOTE ON NON-HOMOGENEOUS HYPERBOLIC OPERATORS WITH LOW REGULARITY COEFFICIENTS}}}
\author{FERRUCCIO COLOMBINI\\
\small{\textit{Dipartimento di Matematica, Universit\`a di Pisa}}\\
\small{\textit{Largo B. Pontecorvo 5, 56127 Pisa, ITALY}}\\
\small{\ttfamily{colombini@dm.unipi.it}} \vspace{0.5cm}\\
FRANCESCO FANELLI\\
\small{\textit{SISSA}}\\
\small{\textit{via Beirut 2/4, 34151 Trieste, ITALY}}\\
\small{\ttfamily{francesco.fanelli@sissa.it}}}
\date{February 16, 2010}
\begin{document}

\maketitle

\subsubsection*{Abstract}
In this paper we obtain an energy estimate for a complete strictly hyperbolic operator with second order coefficients satisfying a
log-Zygmund-continuity condition with respect to $t$, uniformly with respect to $x$, and a log-Lipschitz-continuity condition with respect to $x$,
uniformly with respect to $t$.

\subsubsection*{Keywords}
Hyperbolic equations, log-Zygmund regularity, loss of derivatives.
 
\subsubsection*{MS Classification 2000}
35L15

\section{Introduction}
Let us consider the second order operator
\begin{equation} \label{eq:P}
 P\,=\,\partial^2_t\,-\,\sum_{i,j=1}^n\partial_{x_i}(a_{ij}(t)\partial_{x_j})
\end{equation}
and suppose that $P$ is strictly hyperbolic, i.e. there exist two positive constants $\lambda_0\leq\Lambda_0$ such that
\begin{equation} \label{eq:P_elliptic}
 \lambda_0\,|\xi|^2\,\leq\,\sum_{i,j=1}^n a_{ij}(t)\,\xi_i\xi_j\,\leq\,\Lambda_0\,|\xi|^2
\end{equation}
for all $t\in\mbb{R}$ and all $\xi\in\mbb{R}^n$.

It is well-known (see e.g.~\cite{Horm} and~\cite{Treves}) that, if the coefficients $a_{ij}$ are Lipschitz-continuous, then the following energy 
estimate holds for the operator $P$: for all $s\in\mbb{R}$, there exists $C_s>0$ such that
\begin{eqnarray} \label{eq:en-estim_t_Lip}
& & \sup_{t\in[0,T]}\left(\|u(t,\cdot)\|_{H^{s+1}}\,+\,\|\partial_tu(t,\cdot)\|_{H^s}\right)\,\leq \\
& & \qquad\qquad\qquad\qquad\qquad\leq\,C_s\,\left(\|u(0,\cdot)\|_{H^{s+1}}\,+\,\|\partial_tu(0,\cdot)\|_{H^s}\,+\,
\int^T_0\|Pu(t,\cdot)\|_{H^s}\,dt\right) \nonumber
\end{eqnarray}
for every function $u\in\mc{C}^2([0,T];H^\infty(\mbb{R}^n))$.\\
In particular, the previous energy estimate implies that the Cauchy problem for (\ref{eq:P}) is well-posed in the space $H^\infty$, with no loss
of derivatives.

On the contrary, if the coefficients $a_{ij}$ are not Lipschitz-continuous, then (\ref{eq:en-estim_t_Lip}) is no more true in general, as it is shown
by an example given by Colombini, De Giorgi and Spagnolo in the paper~\cite{Colomb-DeG-S}. Nevertheless,
under suitable weaker regularity assumptions on the coefficients, one can recover the $H^\infty$-well-posedness again, but this time
from an energy estimate with loss of derivatives.

A first result of this type was obtained in the quoted paper~\cite{Colomb-DeG-S}. The authors supposed that there
was a constant $C>0$ such that, for all $\veps\in]0,T]$,
\begin{equation} \label{eq:cont_t_Log-Lip_int}
 \int^{T-\veps}_0|a_{ij}(t+\veps)-a_{ij}(t)|\,dt\,\leq\,C\,\veps\,\log\left(1+\frac{1}{\veps}\right)\,.
\end{equation}
The Fourier trasform with respect to $x$ of the equation, togheter with the new ``approximate energy technique'' 
(i.e. the approximation of the coefficients is different in different zones of the phase space), enabled them to obtain the following energy estimate:
there exist strictly positive constants $K$ (indipendent of $s$) and $C_s$ such that
\begin{eqnarray} \label{eq:en-estim_t_Log-Lip}
& & \sup_{t\in[0,T]}\left(\|u(t,\cdot)\|_{H^{s+1-K}}\,+\,\|\partial_tu(t,\cdot)\|_{H^{s-K}}\right)\,\leq \\
& & \qquad\qquad\qquad\qquad\qquad\leq\,C_s\,\left(\|u(0,\cdot)\|_{H^{s+1}}\,+\,\|\partial_tu(0,\cdot)\|_{H^s}\,+\,
\int^T_0\|Pu(t,\cdot)\|_{H^s}\,dt\right) \nonumber
\end{eqnarray}
for all $u\in\mc{C}^2([0,T];H^\infty(\mbb{R}^n))$.

Considering again the case that the coefficients of $P$ depend only on the time variable, in the recent paper~\cite{Tarama}
(see also~\cite{Yama}) Tarama has weakened the regularity hypotesis further, supposing a log-Zygmund type integral condition, i.e.
that there exists a constant $C>0$ such that, for all $\veps\in]0,T/2]$,
\begin{equation} \label{eq:cont_t_Log-Zyg_int}
 \int^{T-\veps}_{\veps}|a_{ij}(t+\veps)+a_{ij}(t-\veps)-2a_{ij}(t)|\,dt\,\leq\,C\,\veps\,\log\left(1+\frac{1}{\veps}\right)\,.
\end{equation}
Nevertheless, he has been able to prove the well-posedness to the Cauchy problem for (\ref{eq:P}) in the space $H^\infty$: the improvement with respect 
to~\cite{Colomb-DeG-S} was obtained introducing a new type of approximate energy, which involves the second derivatives
of the approximating coefficients.

Much more difficulties arise if the operator $P$ has coefficients depending both on the time variable $t$ and on the space variables $x$. This
case was considered by Colombini and Lerner in the paper~\cite{Colomb-Lern}. They supposed a pointwise log-Lipschitz regularity condition, i.e.
that there exists $C>0$ such that, for all $\veps\in]0,T]$,
\begin{equation} \label{eq:cont_t-x_Log-Lip_punt}
 \sup_{\begin{array}{c}
        \scriptstyle y,z\in[0,T]\times\mbb{R}^n \\
	\scriptstyle |z|=\veps
       \end{array}} |a_{ij}(y+z)-a_{ij}(y)|\,\leq\,C\,\veps\,\log\left(1+\frac{1}{\veps}\right)\,.
\end{equation}
Because the coefficients of the operator $P$ depend also on the space variables, here the Littlewood-Paley dyadic decomposition with respect
to $x$ takes the place of the Fourier trasform, and it is, togheter with the approximate energy technique, the key tool to obtain the energy estimate:
for all fixed $\theta\in]0,1/4]$, there exist $\beta$, $C>0$ and $T^*\in]0,T]$ such that
\begin{eqnarray} \label{eq:en-estim_t-x_Log-Lip}
& & \sup_{t\in[0,T^*]}\left(\|u(t,\cdot)\|_{H^{-\theta+1-\beta t}}\,+\,\|\partial_tu(t,\cdot)\|_{H^{-\theta-\beta t}}\right)\,\leq \\
& & \qquad\qquad\qquad\qquad\qquad\leq\,C\,\left(\|u(0,\cdot)\|_{H^{-\theta+1}}\,+\,\|\partial_tu(0,\cdot)\|_{H^{-\theta}}\,+\,
\int^T_0\|Pu(t,\cdot)\|_{H^{-\theta-\beta t}}\,dt\right) \nonumber
\end{eqnarray}
for all $u\in\mc{C}^2([0,T^*];H^\infty(\mbb{R}^n))$.\\
In this case, the loss of derivatives gets worse with the increasing of time.

In a recent paper (\hspace{-0.1cm}~\cite{Colomb-DelSant_2}), Colombini and Del Santo considered the case of one space variable (i.e. $n=1$) and
studied again the case of the coefficient $a$ depending both on $t$ and $x$, but under a special regularity condition: they mixed condition
(\ref{eq:cont_t_Log-Zyg_int}) togheter with (\ref{eq:cont_t-x_Log-Lip_punt}). In particular, they supposed $a$ to be log-Zygmund-continuous
with respect to $t$, uniformly with respect to $x$, and log-Lipschitz-continuous with respect to $x$, uniformly with respect to $t$. The dyadic
decomposition technique and the Tarama's approximate energy enabled them to obtain an estimate similar to (\ref{eq:en-estim_t-x_Log-Lip}).

The reason why they focused on the case $n=1$ is that the case of several space variables needs some different and new ideas in the definition of the
microlocal energy: this point still remains as an open problem.

\bigskip
In the present note, we will consider the case of the non-homogeneous operator
\begin{equation} \label{eq:L}
Lu\,=\,\partial_{t}^{2}u\,-\,\partial_x(a(t,x)\partial_x u)\,+\,b_0(t,x)\partial_tu\,+\,b_1(t,x)\partial_x u\,+\,c(t,x)u\;,
\end{equation}
where the coefficient $a$ satisfy the same regularity assumptions as in~\cite{Colomb-DelSant_2}. We will also suppose that $b_0$,
$b_1\in L^\infty(\mbb{R}_t;\mc{C}^{\omega}(\mbb{R}_x))$, $\omega>0$, and $c$ bounded on $\mbb{R}_t\times\mbb{R}_x$.
We will apply the Littlewood-Paley decomposition and the Tarama's approximate energy again to obtain an energy estimate with a loss of
derivatives that depends on $t$, as in (\ref{eq:en-estim_t-x_Log-Lip}).

One can find the estimate of the second order coefficient $a$ in the paper~\cite{Colomb-DelSant_2}, however, for reader's convenience,
we will give here all the details.
 
\section{Main result}
Let $a:\mbb{R}^2\rightarrow\mbb{R}$ be a function such that, for positive constants $\lambda_0\leq\Lambda_0$ and $C_0$, one has, for all
$(t,x)\in\mbb{R}^2$ and all $\tau>0$, $y>0$,
\begin{eqnarray}
 & \lambda_0\leq a(t,x)\leq\Lambda_0   \label{(N)eq_condiz_1.1} \\
 & \sup_{(t,x)}|a(t+\tau,x)+a(t-\tau,x)-2a(t,x)|\leq C_0\,\tau\log\left(\frac{1}{\tau}+1\right) \label{(N)eq_condiz_1.2} \\
 & \sup_{(t,x)}|a(t,x+y)-a(t,x)|\leq C_0\,y\log\left(\frac{1}{y}+1\right) \label{(N)eq_condiz_1.3}
\end{eqnarray}
Moreover, let
\begin{equation} \label{(N)eq_condiz_B}
b_0\,,\,b_1\;\in\,L^\infty(\mbb{R}_t;\mc{C}^{\omega}(\mbb{R}_x))\,,
\end{equation}
where $\omega>0$, and
\begin{equation} \label{(N)eq_condiz_c}
c\;\in\,L^{\infty}(\mbb{R}_t\times\mbb{R}_x)\;.
\end{equation}

\begin{teorema} \label{(N)teor_teorema1}
Let us consider, on the whole space $\mbb{R}^2$, the operator
\begin{equation} \label{(N)operatore}
Lu\,=\,\partial_{t}^{2}u\,-\,\partial_x(a(t,x)\partial_x u)\,+\,b_0(t,x)\partial_tu\,+\,b_1(t,x)\partial_x u\,+\,c(t,x)u\;,
\end{equation}
where the coefficients $a$, $b_0$, $b_1$ and $c$ satisfy the hypothesis (\ref{(N)eq_condiz_1.1})-(\ref{(N)eq_condiz_c}).\\
Then, for all fixed
$$
\theta\,\in\,\left]0,\min\left\{\frac{1}{2},\frac{\omega}{1+\log2}\right\}\right[\;,
$$
there exist $\beta^*>0$, a time $T\in\mbb{R}$ and a constant $C>0$ such that
\begin{eqnarray} \label{(N)eq_tesi}
 & & \sup_{[0,T]}\left(\|u(t,\cdot)\|_{H^{1-\theta-\beta^*t}}+
\|\partial_t u(t,\cdot)\|_{H^{-\theta-\beta^*t}}\right)\,\leq \\
 & & \qquad\qquad\qquad\qquad\leq\,C\,\left(\|u(0,\cdot)\|_{H^{1-\theta}}+
\|\partial_t u(0,\cdot)\|_{H^{-\theta}}+\int_{0}^{T}\|Lu(t,\cdot)\|_{H^{-\theta-\beta^*t}}dt\right) \nonumber
\end{eqnarray}
for all $u\in\mc{C}^2([0,T];H^\infty(\mbb{R}_x))$.
\end{teorema}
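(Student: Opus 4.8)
We follow the scheme of~\cite{Colomb-Lern} and~\cite{Colomb-DelSant_2}: a Littlewood--Paley decomposition in $x$ combined with Tarama's approximate energy, the terms $b_0\partial_t u$, $b_1\partial_x u$ and $cu$ being treated as lower order perturbations. First we write $u=\sum_{\nu\geq-1}u_\nu$, $u_\nu=\Delta_\nu u$ (localized at $|\xi|\sim2^\nu$). Since $a$ depends on $x$ as well, $\Delta_\nu$ does not commute with $\partial_x(a\,\partial_x\,\cdot\,)$, so that
\[
Lu_\nu\;=\;\Delta_\nu(Lu)\;+\;\partial_x\!\bigl([\Delta_\nu,a]\,\partial_x u\bigr)\;-\;[\Delta_\nu,b_0]\,\partial_t u\;-\;[\Delta_\nu,b_1]\,\partial_x u\;-\;[\Delta_\nu,c]\,u\,.
\]
For each $\nu$ we regularize $a$ in the time variable alone, at scale $\veps_\nu=2^{-\nu}$, obtaining $a_\nu(t,x)$; using (\ref{(N)eq_condiz_1.1})--(\ref{(N)eq_condiz_1.2}) with an even mollifier, we get $\lambda_0\leq a_\nu\leq\Lambda_0$, the crucial bound $|\partial_t^2a_\nu|\leq C\,2^\nu\log(2^\nu+1)$ (which uses only the log--Zygmund second difference, hence loses a \emph{single} logarithm), and the coarser estimates $|a-a_\nu|+2^{-\nu}|\partial_ta_\nu|\leq C\,2^{-\nu}(\log(2^\nu+1))^2$, while $a_\nu(t,\cdot\,)$ keeps the log--Lipschitz bound (\ref{(N)eq_condiz_1.3}) in $x$ uniformly in $\nu$.

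Next, for each $\nu$ we introduce Tarama's approximate energy $\mc{E}_\nu(t)$, built from $u_\nu$, $\partial_t u_\nu$, $a_\nu$ and $\partial_t a_\nu$, with the two features: (i) $\mc{E}_\nu(t)$ is comparable to $\|\partial_tu_\nu(t,\cdot)\|_{L^2}^2+2^{2\nu}\|u_\nu(t,\cdot)\|_{L^2}^2$ uniformly in $\nu$ and $t\in[0,T]$ (the correction terms carry an extra power of $\veps_\nu$, hence stay a small perturbation once $T$ is small); (ii) differentiating $\mc{E}_\nu$ in time, using the equation for $u_\nu$ and integration by parts in $x$, the terms containing $\partial_t a_\nu$ — which for the naive energy $\|\partial_tu_\nu\|^2+\langle a_\nu\partial_x u_\nu,\partial_xu_\nu\rangle$ would give a contribution of size $(\log(2^\nu+1))^2\mc{E}_\nu$, i.e.\ a \emph{quadratic} loss rate — cancel, leaving only a term governed by $\partial_t^2a_\nu$, of size $\log(2^\nu+1)\mc{E}_\nu$. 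This yields, for every $\nu$,
\[
\frac{d}{dt}\mc{E}_\nu(t)\;\leq\;C_0\,\log(2^\nu+1)\,\mc{E}_\nu(t)\;+\;\mc{R}_\nu(t)\;+\;C\,\|\Delta_\nu Lu(t,\cdot)\|_{L^2}\,\mc{E}_\nu(t)^{1/2}\,,
\]
where $\mc{R}_\nu$ collects the commutators above together with $(a-a_\nu)\,\partial_x^2u_\nu$.

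We then estimate $\mc{R}_\nu$. The term $\partial_x([\Delta_\nu,a]\partial_x u)$ is handled by the log--Lipschitz commutator lemma of~\cite{Colomb-Lern} (using (\ref{(N)eq_condiz_1.3})): after multiplication by the dyadic weights and summation it contributes $\lesssim\log(2^\nu+1)\mc{E}_\nu$, again a linear loss rate; the term $(a-a_\nu)\partial_x^2u_\nu$ is absorbed in the energy scheme and is of the same type. For the non-homogeneous terms we use Bony's paraproduct decomposition: the low--high pieces cost only $\|b_j\|_{L^\infty}\mc{E}_\nu^{1/2}$, respectively $\|c\|_{L^\infty}2^{-\nu}\mc{E}_\nu^{1/2}$, hence a bounded loss rate, whereas the remaining pieces and the commutators $[\Delta_\nu,b_j]$, $[\Delta_\nu,c]$ gain a factor $2^{-\omega\nu}$ from (\ref{(N)eq_condiz_B})--(\ref{(N)eq_condiz_c}). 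Balancing this H\"older gain against the logarithmic factor $\log(2^\nu+1)$ coming from the approximation at scale $\veps_\nu=2^{-\nu}$, and against the time-dependent loss carried by the weights, one finds that these contributions are summable and bounded by the total energy precisely under the restriction $\theta<\omega/(1+\log2)$; the restriction $\theta<1/2$ is needed separately, so that the weighted energy stays comparable to the norms in (\ref{(N)eq_tesi}) and the $\ell^2$-summations converge.

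Finally we set $E(t):=\sum_{\nu\geq-1}2^{-2\nu(\theta+\beta^*t)}\mc{E}_\nu(t)$, which by the above is comparable to $\|u(t,\cdot)\|_{H^{1-\theta-\beta^*t}}^2+\|\partial_tu(t,\cdot)\|_{H^{-\theta-\beta^*t}}^2$ as long as $\theta+\beta^*t<1/2$ — whence we take $T$ small. Differentiating, the $t$-derivative of the weights produces the negative term $-2\beta^*\log2\,\nu\,2^{-2\nu(\theta+\beta^*t)}\mc{E}_\nu$, which, choosing $\beta^*$ large enough (the finitely many low frequencies being absorbed trivially), dominates the bad term $C_0\log(2^\nu+1)\mc{E}_\nu$ and the linear-rate part of $\mc{R}_\nu$; summing what is left of $\mc{R}_\nu$ (bounded by $E(t)$) and the source (bounded, by Cauchy--Schwarz, by $C\|Lu(t,\cdot)\|_{H^{-\theta-\beta^*t}}E(t)^{1/2}$) gives $\frac{d}{dt}E(t)\leq C\,E(t)+C\,\|Lu(t,\cdot)\|_{H^{-\theta-\beta^*t}}E(t)^{1/2}$, hence $\frac{d}{dt}E(t)^{1/2}\leq C\,E(t)^{1/2}+C\,\|Lu(t,\cdot)\|_{H^{-\theta-\beta^*t}}$; Gronwall's lemma on $[0,T]$ yields~(\ref{(N)eq_tesi}). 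The main obstacle is step~(ii) above: one must design $\mc{E}_\nu$ and carry out the computation of $\frac{d}{dt}\mc{E}_\nu$ so that only $\partial_t^2a_\nu$ — never the modulus of continuity of $a$ in $t$, which would force a quadratic loss rate — appears, while simultaneously keeping the $x$-dependence under control through the $\Delta_\nu$-commutator and absorbing the lower order terms within the same frequency-by-frequency weighted estimate.
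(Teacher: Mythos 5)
Your proposal follows essentially the same route as the paper: Littlewood--Paley decomposition in $x$, Tarama's approximate energy with the frequency-adapted regularization $\veps_\nu=2^{-\nu}$, commutator estimates for $a$, $b_0$, $b_1$, $c$ summed via Schur's lemma, a time-dependent weight $2^{-2\nu(\theta+\beta^* t)}$ whose derivative absorbs the linear-in-$\nu$ loss rate, and Gronwall. The identification of the main obstacle (designing $e_{\nu,\veps}$ so that only $\partial_t^2 a_\veps$, with its single logarithm, drives the loss) and of the origin of the two restrictions on $\theta$ are both accurate.

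Two points, however, need repair. First, you regularize $a$ \emph{in the time variable alone}. But when you differentiate the approximate energy and integrate by parts against $\partial_x(a\,\partial_x u_\nu)$, you produce the quantities $\partial_x\sqrt{a_\veps}$ and $\partial_x\bigl(\partial_t\sqrt{a_\veps}/\sqrt{a_\veps}\bigr)$; a coefficient that is merely log-Lipschitz in $x$ (which is all that a time-mollification preserves) does not admit pointwise bounds on these, so the computation of $\frac{d}{dt}\mc{E}_\nu$ as you describe it cannot be carried out. The paper mollifies in \emph{both} variables at scale $\veps$, precisely to obtain $|\partial_x a_\veps|\lesssim\log(1/\veps+1)$ and $|\partial_t\partial_x a_\veps|\lesssim\veps^{-1}\log(1/\veps+1)$ from (\ref{(N)eq_condiz_1.3}); with $\veps=2^{-\nu}$ these two terms then contribute at the admissible rates $\log(2^\nu+1)\,e_\nu$ and (after pairing with $u_\nu$ and Bernstein) $\log(2^\nu+1)\,e_\nu$ respectively. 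Second, you claim the commutator $[\Delta_\nu,c]$ gains a factor $2^{-\omega\nu}$ from (\ref{(N)eq_condiz_c}); but $c$ is only assumed bounded, so $\|[\Delta_\nu,c]\|_{\mc{L}(L^2)}$ is merely $O(1)$. The needed decay for that term comes instead from the fact that it acts on $u$ rather than on a derivative of $u$, i.e.\ from $\|u_\mu\|_{L^2}\lesssim 2^{-\mu}\|\partial_x u_\mu\|_{L^2}$, which supplies the factor $2^{-\mu}$ making the Schur kernel summable. Both fixes are routine and leave your overall scheme intact.
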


\section{Proof of Theorem \ref{(N)teor_teorema1}}
\subsection{Approximation of the coefficient $a(t,x)$}
Let $\rho\in\mc{C}_{0}^{\infty}(\mbb{R})$ be an even function such that:
\begin{enumerate}
 \item $0\leq\rho\leq1$
 \item $supp\,\rho\,\subset\,[-1,1]$
 \item $\int\rho(s)ds=1$
 \item $|\rho'(s)|\leq2$;
\end{enumerate}
for all $0<\veps\leq1$, we set $\rho_{\veps}(s)=\frac{1}{\veps}\,\rho\left(\frac{s}{\veps}\right)$.

Then, for all $0<\veps\leq1$, we define
\begin{equation} \label{(N)eq_a_eps}
a_\veps(t,x)\,:=\,\int_{\mbb{R}_t\times\mbb{R}_x}\rho_\veps(t-s)\,\rho_\veps(x-y)\,a(s,y)\,dsdy\;.
\end{equation}

\begin{lemma} \label{(N)lemma_approx}
The following inequalities hold true:
\begin{enumerate}
 \item for all $\veps\in]0,1]$, for all $(t,x)\in\mbb{R}^2$, one has
\begin{equation} \label{(N)eq_condiz_1.1_eps}
\lambda_0\,\leq\,a_\veps(t,x)\,\leq\,\Lambda_0\;;
\end{equation}
 \item for all $\veps\in]0,1]$, one has
\begin{equation} \label{(N)eq_a_eps-a}
\sup_{(t,x)}|a_\veps(t,x)-a(t,x)|\,\leq\,
\frac{3}{2}C_0\,\veps\,\log\left(\frac{1}{\veps}+1\right)\;;
\end{equation}
 \item for all $\sigma\in]0,1[$, a constant $c_\sigma>0$ exists such that, for all $\veps\in]0,1]$,
\begin{equation} \label{(N)eq_d_t_a_eps}
\sup_{(t,x)}|\partial_t a_\veps(t,x)|\,\leq\,c_\sigma (\Lambda_0+C_0)\,\veps^{\sigma-1}\;;
\end{equation}
 \item for all $\veps\in]0,1]$, one has
\begin{eqnarray}
\sup_{(t,x)}|\partial_x a_\veps(t,x)| & \leq &
C_0\|\rho'\|_{L^1}\log\left(\frac{1}{\veps}+1\right) \label{(N)eq_d_x_a_eps} \\
\sup_{(t,x)}\left|\partial_{t}^{2} a_\veps(t,x)\right| & \leq &
\frac{C_0}{2}\|\rho''\|_{L^1}\frac{1}{\veps}\log\left(\frac{1}{\veps}+1\right) \label{(N)eq_d_t^2_a_eps} \\
\sup_{(t,x)}|\partial_t\partial_x a_\veps(t,x)| & \leq &
C_0\|\rho'\|_{L^1}^{2}\frac{1}{\veps}\log\left(\frac{1}{\veps}+1\right) \label{(N)eq_d_t_d_x_a_eps}\;.
\end{eqnarray}
\end{enumerate}
\end{lemma}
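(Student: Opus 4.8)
The plan is to prove each inequality in Lemma~\ref{(N)lemma_approx} by standard mollification estimates, exploiting carefully the parity of $\rho$ in order to extract the extra order of smoothing in the time variable that the log-Zygmund condition~(\ref{(N)eq_condiz_1.2}) provides. Throughout I would keep in mind the elementary bounds $\int\rho_\veps=1$, $\int|\rho_\veps'|=\|\rho'\|_{L^1}/\veps$, $\int|\rho_\veps''|=\|\rho''\|_{L^1}/\veps^2$, and the support condition $\mathrm{supp}\,\rho_\veps\subset[-\veps,\veps]$.

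First, inequality~(\ref{(N)eq_condiz_1.1_eps}) is immediate: since $a_\veps$ is an average of $a$ against the probability density $\rho_\veps(t-s)\rho_\veps(x-y)\,ds\,dy$, the ellipticity bounds~(\ref{(N)eq_condiz_1.1}) pass to $a_\veps$ verbatim. For~(\ref{(N)eq_a_eps-a}), I would write $a_\veps(t,x)-a(t,x)=\int\rho_\veps(\tau)\rho_\veps(y)\bigl(a(t-\tau,x-y)-a(t,x)\bigr)\,d\tau\,dy$ and split the difference as $\bigl(a(t-\tau,x-y)-a(t,x-y)\bigr)+\bigl(a(t,x-y)-a(t,x)\bigr)$. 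The $x$-increment is controlled by~(\ref{(N)eq_condiz_1.3}); for the $t$-increment I would use the parity of $\rho$ to symmetrize, replacing $a(t-\tau,x-y)-a(t,x-y)$ by $\tfrac12\bigl(a(t+\tau,x-y)+a(t-\tau,x-y)-2a(t,x-y)\bigr)$, which is exactly the quantity estimated by the log-Zygmund condition~(\ref{(N)eq_condiz_1.2}); since $|y|\le\veps$ and $|\tau|\le\veps$ and $s\mapsto s\log(1+1/s)$ is increasing near $0$, this yields the constant $\tfrac32 C_0$.

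For the derivative bounds I would differentiate under the integral sign. For~(\ref{(N)eq_d_x_a_eps}) and~(\ref{(N)eq_d_t_d_x_a_eps}), $\partial_x$ hits $\rho_\veps(x-y)$, producing a factor $\rho_\veps'$; integrating by parts in $y$ (or simply using $\int\rho_\veps'(x-y)\,dy=0$ to insert a difference) converts the estimate into one for an $x$-increment of $a$ (for $\partial_x a_\veps$) controlled by~(\ref{(N)eq_condiz_1.3}), and after a further time-mollification this gives the $\log$ bound with the factor $\|\rho'\|_{L^1}$; applying $\partial_t$ in addition brings a factor $\rho_\veps'$ in time, hence the extra $1/\veps$ and the square $\|\rho'\|_{L^1}^2$. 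For~(\ref{(N)eq_d_t^2_a_eps}) I would let both $t$-derivatives fall on $\rho_\veps$, giving $\partial_t^2 a_\veps=\int\rho_\veps''(t-s)\rho_\veps(x-y)a(s,y)\,ds\,dy$; using that $\rho$ is even so $\rho''$ is even and $\int\rho_\veps''=0$, $\int s\,\rho_\veps''(s)\,ds=0$, I can replace $a(s,y)$ by the second symmetric difference $a(s,y)-\tfrac12(a(t+\tau,y)+a(t-\tau,y))$-type combination and invoke~(\ref{(N)eq_condiz_1.2}), producing the factor $\tfrac{C_0}{2}\|\rho''\|_{L^1}\,\tfrac1\veps\log(1+1/\veps)$. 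The genuinely different estimate is~(\ref{(N)eq_d_t_a_eps}): here we have no Lipschitz control in $t$, only log-Zygmund, so a naive $\partial_t a_\veps=\int\rho_\veps'(t-s)\rho_\veps(x-y)a(s,y)$ would only give an $O(\veps^{-1})$ bound. I would instead proceed in two stages — first mollify only in $x$ to define an intermediate $\tilde a_\veps(t,x)$, which inherits a log-Lipschitz-type modulus in $t$ from~(\ref{(N)eq_condiz_1.2}) via a Zygmund-implies-almost-Lipschitz argument (the second difference bound integrates up to $|a(t+\tau,x)-a(t,x)|\lesssim\tau\log(1+1/\tau)\cdot\log(\dots)$, in particular $\le C\,\tau^{\sigma}$-summable form giving Hölder-$\sigma'$ continuity for every $\sigma'<1$), then mollify in $t$ and use that mollifying a $C^{\sigma}$ function costs $\veps^{\sigma-1}$ in the first derivative.

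The main obstacle is precisely~(\ref{(N)eq_d_t_a_eps}): converting the \emph{second}-order log-Zygmund modulus in $t$ into a usable \emph{first}-order Hölder-type modulus, so that the standard "one derivative of a mollifier costs $\veps^{-1}$, tempered by $\veps^{\sigma}$ from $C^\sigma$ regularity" heuristic applies and produces $\veps^{\sigma-1}$ for arbitrary $\sigma\in(0,1)$. All the other estimates are routine bookkeeping with $\|\rho^{(k)}\|_{L^1}$ factors once the parity of $\rho$ is used to symmetrize the relevant differences so that~(\ref{(N)eq_condiz_1.2}) rather than a Lipschitz bound is what gets invoked.
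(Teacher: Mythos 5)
Your proposal is correct and follows essentially the same route as the paper: symmetrizing in $t$ via the parity of $\rho$ so that the log-Zygmund condition (\ref{(N)eq_condiz_1.2}) can be invoked for (\ref{(N)eq_a_eps-a}) and (\ref{(N)eq_d_t^2_a_eps}), using $\int\rho_\veps^{(k)}=0$ to insert increments controlled by (\ref{(N)eq_condiz_1.3}) for the $x$-derivatives, and reducing (\ref{(N)eq_d_t_a_eps}) to the fact that boundedness plus the second-difference bound yields $\sup_{(t,x)}|a(t+\tau,x)-a(t,x)|\leq c_\sigma'(\Lambda_0+C_0)\tau^\sigma$, which is exactly the intermediate estimate (\ref{(N)eq_2.6}) the paper states (also without detailed proof).
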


\begin{proof}
Inequalities in (\ref{(N)eq_condiz_1.1_eps}) immediately follow from the fact that $|\rho|\leq1$.

Relation (\ref{(N)eq_a_eps-a}), instead, follows from (\ref{(N)eq_condiz_1.2}), after one has observed that
$$
a_\veps(t,x)-a(t,x)=\frac{1}{2}\int_{\mbb{R}_t}\rho_\veps(s)\int_{\mbb{R}_y}\rho_\veps(x-y)(a(t+s,y)+a(t-s,y)-2a(t,y))\,dy\,ds\;,
$$
where we have used the fact that $\rho$ is an even function. 

Moreover, one has
$$
\partial_t^2a_\veps(t,x)=\frac{1}{2}\int\rho_\veps''(s)\int\rho_\veps(x-y)(a(t+s,y)+a(t-s,y)-2a(t,y))\,dy\,ds\;,
$$
from which one can deduce (\ref{(N)eq_d_t^2_a_eps}).

Inequalities (\ref{(N)eq_d_x_a_eps}) and (\ref{(N)eq_d_t_d_x_a_eps}) derive from (\ref{(N)eq_condiz_1.3}) in a very similar way.

Finally, relation (\ref{(N)eq_d_t_a_eps}) is a consequence of the fact that (\ref{(N)eq_condiz_1.1}) and (\ref{(N)eq_condiz_1.2}) imply that
for all $\sigma\in]0,1[$, a constant $c_\sigma'>0$ exists such that, for all $\tau>0$, one has
\begin{equation} \label{(N)eq_2.6}
\sup_{(t,x)}|a(t+\tau,x)-a(t,x)|\,\leq\,c_\sigma'\,(\Lambda_0+C_0)\,\tau^\sigma\;.
\end{equation}
\end{proof}

\subsection{Littlewood-Paley decomposition} \label{sub-sec:L-P_dec}
We collect here some well-known facts about dyadic decomposition, referring to~\cite{Bony},~\cite{Colomb-Lern} and~\cite{Metivier} for the details.

Let $\vphi_0\in\mc{C}_{0}^{\infty}(\mbb{R}_\xi)$ be an even function, decreasing on $[0,+\infty[$, such that $0\leq\vphi_0\leq1$ and
$$
\vphi_0(\xi)=1\,\mbox{ if }\,|\xi|\leq1\quad\mbox{,}\quad\vphi_0(\xi)=0\,\mbox{ if }\,|\xi|\geq2.
$$
We set $\vphi(\xi)\,=\,\vphi_0(\xi)-\vphi_0(2\xi)$ and, for $\nu\in\mbb{N}\backslash\{0\}$, $\vphi_{\nu}(\xi):=\vphi(2^{-\nu}\xi)$.

For a tempered distribution $u\in H^{-\infty}(\mbb{R})$, we define
$$
u_\nu(x)\,:=\,\vphi_\nu(D_x)u(x)\,=\,\frac{1}{2\pi}\int e^{ix\xi}\vphi_\nu(\xi)\,\what{u}(\xi)\,d\xi\,=\,
\frac{1}{2\pi}\int\what{\vphi_\nu}(y)u(x-y)\,dy\,;
$$
for all $\nu$, $u_\nu$ is an entire analytic function belonging to $L^2$.\\
Moreover, for all $s\in\mbb{R}$ there exists a constant $C_s>0$ such that
\begin{equation} \label{eq:dyadic-dec_Sobolev-norm}
 \frac{1}{C_s}\,\sum^{+\infty}_{\nu=0}2^{2\nu s}\|u_\nu\|^2_{L^2}\,\leq\,\|u\|^2_{H^s}\,\leq\,
C_s\,\sum^{+\infty}_{\nu=0}2^{2\nu s}\|u_\nu\|^2_{L^2}
\end{equation}
and the following inequalities (called ``Bernstein's inequalities'') hold:
\begin{eqnarray}
 \|\partial_x u_\nu\|_{L^2} & \leq & 2^{\nu+1}\,\|u_\nu\|_{L^2}\qquad\qquad\;\mbox{  for all } \nu\geq0 \label{eq:Bern-ineq_deriv<} \\
 \|u_\nu\|_{L^2} & \leq & 2^{-(\nu-1)}\,\|\partial_x u_\nu\|_{L^2}\qquad\mbox{ for all } \nu\geq1 \label{eq:Bern-ineq_<deriv}\,.
\end{eqnarray}

We end this subsection quoting a result which will be useful in the following; for its proof, see~\cite{Colomb-Lern}.\\
We denote with $[A,B]$ the commutator between two linear operators $A$ and $B$ and with $\mc{L}(L^2)$ the space of bounded linear operators
from $L^2$ to $L^2$.
\begin{lemma} \label{lemma:comm}
 \begin{enumerate}
  \item There exist $C>0$, $\nu_0\in\mbb{N}$ such that, for all $a\in L^\infty(\mbb{R})$ satisfying
$$
\sup_{x\in\mbb{R}}|a(x+y)-a(x)|\,\leq\,C_0\,y\,\log\left(1+\frac{1}{y}\right)
$$
for all $y>0$, one has, for all $\nu\geq\nu_0$,
\begin{equation} \label{eq:comm_Log-Lip}
 \|\,[\vphi_\nu(D_x),a(x)]\,\|_{\mc{L}(L^2)}\,\leq\,C\,(\|a\|_{L^\infty}+C_0)\,2^{-\nu}\,\nu\,.
\end{equation}
 \item There exist $C>0$, $\nu_0\in\mbb{N}$ such that, for all $b\in\mc{C}^\omega(\mbb{R})$ and all $\nu\geq\nu_0$, one has
\begin{equation} \label{eq:comm_Holder}
 \|\,[\vphi_\nu(D_x),b(x)]\,\|_{\mc{L}(L^2)}\,\leq\,C\,\|b\|_{\mc{C}^\omega}\,2^{-\nu}\,.
\end{equation}
 \end{enumerate}
\end{lemma}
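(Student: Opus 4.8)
The plan is to prove both inequalities by the same device: realise the commutator as an integral operator, bound its kernel, and invoke Schur's lemma. Write $\vphi_\nu(D_x)v=k_\nu*v$, where $k_\nu=\frac{1}{2\pi}\what{\vphi_\nu}$ is the convolution kernel appearing in the formula for $u_\nu$ in Subsection \ref{sub-sec:L-P_dec}; since $\vphi_\nu(\xi)=\vphi(2^{-\nu}\xi)$ and $\vphi\in\mc{C}_0^\infty(\mbb{R})$, one has $k_\nu(z)=2^\nu k_0(2^\nu z)$ with $k_0\in\mc{S}(\mbb{R})$. For $u\in L^2(\mbb{R})$ and $a\in L^\infty(\mbb{R})$ a direct computation gives
$$[\vphi_\nu(D_x),a(x)]u(x)\,=\,\int_{\mbb{R}}k_\nu(x-y)\,\bigl(a(y)-a(x)\bigr)\,u(y)\,dy\,,$$
so the commutator is the integral operator with kernel $K_\nu(x,y)=k_\nu(x-y)(a(y)-a(x))$: the factor $a(y)-a(x)$, vanishing on the diagonal, is exactly what makes $K_\nu$ absolutely integrable. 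By Schur's lemma,
$$\|[\vphi_\nu(D_x),a(x)]\|_{\mc{L}(L^2)}\,\leq\,\Bigl(\sup_x\int|K_\nu(x,y)|\,dy\Bigr)^{1/2}\Bigl(\sup_y\int|K_\nu(x,y)|\,dx\Bigr)^{1/2}\,,$$
and, after the change of variable $y=x-h$ (resp. $x=y+h$) and using that $k_0$ is even and that $\sup_x|a(x+h)-a(x)|=\sup_x|a(x-h)-a(x)|$, both suprema reduce to estimating $\int_{\mbb{R}}|k_\nu(h)|\,\omega_a(|h|)\,dh$, where $\omega_a$ is the modulus of continuity of $a$.

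First I would prove part (1). I would split the integral into the region $|h|>1$, where I use only $|a(x-h)-a(x)|\leq2\|a\|_{L^\infty}$ together with the rapid decay of $k_0$ --- since $\int_{|h|>1}|k_\nu(h)|\,dh=\int_{|z|>2^\nu}|k_0(z)|\,dz\leq C_N\,2^{-N\nu}$ for every $N$, this contributes at most $C\,\|a\|_{L^\infty}\,2^{-\nu}$ once $\nu\geq\nu_0$ --- and the near-diagonal region $|h|\leq1$, where I insert the log-Lipschitz bound $|a(x-h)-a(x)|\leq C_0\,|h|\log(1+1/|h|)$ and rescale $z=2^\nu h$:
$$\int_{|h|\leq1}|k_\nu(h)|\,C_0\,|h|\log\!\Bigl(1+\frac{1}{|h|}\Bigr)\,dh\,=\,C_0\,2^{-\nu}\int_{|z|\leq2^\nu}|k_0(z)|\,|z|\,\log\!\Bigl(1+\frac{2^\nu}{|z|}\Bigr)\,dz\,.$$
On $|z|\leq2^\nu$ one has $\log(1+2^\nu/|z|)\leq(\nu+1)\log2+\log_+(1/|z|)$, so the last integral is bounded by $(\nu+1)\log2\int_{\mbb{R}}|k_0(z)|\,|z|\,dz+\int_{|z|\leq1}|k_0(z)|\,|z|\log(1/|z|)\,dz$; both integrals are finite absolute constants, the first because $k_0\in\mc{S}$, the second because in addition $|z|\log(1/|z|)$ is bounded near the origin. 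Hence the near-diagonal term is $\leq C\,C_0\,2^{-\nu}\nu$ for $\nu\geq\nu_0$, and summing the two contributions and feeding them into Schur's lemma yields (\ref{eq:comm_Log-Lip}).

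Part (2) is the same computation with the H\"older modulus $|b(x-h)-b(x)|\leq\|b\|_{\mc{C}^\omega}\,|h|^\omega$ on $|h|\leq1$ and $|b(x-h)-b(x)|\leq2\|b\|_{L^\infty}\leq2\|b\|_{\mc{C}^\omega}$ on $|h|>1$; the rescaling $z=2^\nu h$ now produces the clean factor $2^{-\omega\nu}\|b\|_{\mc{C}^\omega}\int_{\mbb{R}}|k_0(z)|\,|z|^\omega\,dz$, with no logarithmic loss, which gives (\ref{eq:comm_Holder}) (indeed with $2^{-\omega\nu}$ in place of $2^{-\nu}$, hence a fortiori (\ref{eq:comm_Holder}) whenever $\omega\geq1$). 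The hard part, and really the only delicate point, is the near-diagonal estimate in part (1): one must check that the logarithmic correction $\log(1+1/|h|)$ in the modulus of continuity, once transported to the unit scale by the dilation $h\mapsto2^\nu h$, costs no more than one extra factor $\nu$ (plus the harmless integrable singularity $|z|\log(1/|z|)$), and this factor $\nu$ is precisely what makes (\ref{eq:comm_Log-Lip}) weaker than the classical Lipschitz commutator bound. The rest is the routine Schur-lemma bookkeeping for Littlewood--Paley kernels.
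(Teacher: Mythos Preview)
Your argument is correct and is exactly the approach the paper has in mind: the paper does not give its own proof of the lemma but refers to \cite{Colomb-Lern}, and later (when estimating $[\vphi_\nu(D_x),b_0]$ in the commutator section) it sketches precisely the same computation you carry out --- write the kernel $n(x,y)=\what{\vphi}(2^\nu(x-y))\,2^\nu\,(b_0(t,x)-b_0(t,y))$ and apply Schur's lemma using the modulus of continuity of the coefficient.

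One remark on part~(2): you are right that the Schur estimate produces $2^{-\omega\nu}$ rather than $2^{-\nu}$, and that the latter does \emph{not} follow from the former when $0<\omega<1$. This is in fact what the paper needs and uses: in the estimate of $\|[\vphi_\nu(D_x),b_0]\Psi_\mu\|_{\mc{L}(L^2)}$ the bound invoked is $C\,2^{-\nu\omega}$ (resp.\ $C\,2^{-\max\{\nu,\mu\}\omega}$), not $C\,2^{-\nu}$. So the exponent in the displayed inequality (\ref{eq:comm_Holder}) should read $2^{-\nu\omega}$, and your proof establishes exactly that.
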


\subsection{Approximate and total energy}
Let $T_0>0$ and $u\in\mc{C}^2([0,T_0];H^\infty(\mbb{R}_x))$. If we set $u_\nu(t,x)=\vphi_\nu(D_x)u(t,x)$, we obtain
\begin{eqnarray} \label{(N)eq_ni}
(Lu)_{\nu} & = & \partial_{t}^{2}u_{\nu}\,-\,\partial_x(a(t,x)\partial_x u_{\nu}) - \partial_x([\vphi_{\nu}(D_x),a]\partial_x u) \\
           & + &  b_0(t,x)\partial_t u_\nu + [\vphi_\nu(D_x),b_0]\partial_t u\,+\,
b_1(t,x)\partial_x u_\nu + [\vphi_\nu(D_x),b_1]\partial_x u \nonumber\\
           & + &  c(t,x)u_\nu + [\vphi_\nu(D_x),c]u \nonumber\;.
\end{eqnarray}

Now, we introduce the approximate energy of $u_\nu$ (see~\cite{Colomb-DelSant_2} and~\cite{Tarama}), setting
\begin{equation} \label{(N)eq_e_nu-eps}
e_{\nu,\varepsilon}(t):=\int_{\mbb{R}}\left(\frac{1}{\sqrt{a_\varepsilon}}\left|\partial_t u_\nu+
\frac{\partial_t\sqrt{a_\varepsilon}}{2\sqrt{a_\varepsilon}}u_\nu\right|^2+\sqrt{a_\varepsilon}|\partial_x u_\nu|^2+|u_\nu|^2\right)\,dx\;,
\end{equation}
and, taken $\theta$ as in the hypothesis of Theorem \ref{(N)teor_teorema1}, we define the total energy of $u$:
\begin{equation} \label{(N)eq_E_tot}
E(t)\,:=\,\sum_{\nu=0}^{+\infty}e^{-2\beta(\nu+1)t}\,2^{-2\nu\theta}\,e_{\nu,2^{-\nu}}(t)\;,
\end{equation}
where $\beta>0$ will be fixed later on.

\begin{oss} \label{remark:E(0)-E(t)}
From (\ref{eq:dyadic-dec_Sobolev-norm}) and Bernstein's inequalities (\ref{eq:Bern-ineq_deriv<})-(\ref{eq:Bern-ineq_<deriv}), it's easy to see that
there exist two positive constants $C_\theta$ and $C_\theta'$ such that
\begin{eqnarray*}
E(0) & \leq & C_\theta\,(\|\partial_tu(0,\cdot)\|_{H^{-\theta}}\,+\,\|u(0,\cdot)\|_{H^{1-\theta}}) \\ 
E(t) & \geq & C_\theta'\,(\|\partial_tu(t,\cdot)\|_{H^{-\theta-\beta^*t}}\,+\,
\|u(t,\cdot)\|_{H^{1-\theta-\beta^*t}}) 
\end{eqnarray*}
where we have set $\beta^*=\beta(\log2)^{-1}$.
\end{oss}

\medskip
First, we derive $e_{\nu,\veps}$, defined by (\ref{(N)eq_e_nu-eps}), with respect to the time variable, and we obtain
\begin{eqnarray*}
\frac{d}{dt}e_{\nu,\veps}(t) & = & \int\frac{2}{\sqrt{a_\veps}}\,\,\Re\left(\partial_t^2u_\nu\,\cdot\,
\overline{\left(\partial_tu_\nu+\frac{\partial_t\sqrt{a_\veps}}{2\sqrt{a_\veps}}u_\nu\right)}\right)\,dx \\
 & + & \int\frac{2}{\sqrt{a_\veps}}\,\,\Re\left(R_\veps u_\nu\,\cdot\,
\overline{\left(\partial_tu_\nu+\frac{\partial_t\sqrt{a_\veps}}{2\sqrt{a_\veps}}u_\nu\right)}\right)\,dx \\
 & + & \int\partial_t\sqrt{a_\veps}\,\,|\partial_xu_\nu|^2\,dx \\
 & + & \int2\sqrt{a_\veps}\,\,\Re(\partial_xu_\nu\,\cdot\,\overline{\partial_x\partial_tu_\nu})\,dx \\
 & + & \int2\,\,\Re(u_\nu\,\cdot\,\overline{\partial_tu_\nu})\,dx\;,
\end{eqnarray*}
where $R_\veps v\,=\,\partial_t\left(\frac{\partial_t\sqrt{a_\veps}}{2\sqrt{a_\veps}}\right)v-
\left(\frac{\partial_t\sqrt{a_\veps}}{2\sqrt{a_\veps}}\right)^2v$.
Now, we can put in the previous relation the value of $\partial_t^2u_\nu$, given by (\ref{(N)eq_ni}).

Integrating by parts and taking advantage of the spectral localisation of $u_\nu$, we have
\begin{eqnarray*}
 & & \int\frac{2}{\sqrt{a_\veps}}\,\,\Re\left(\partial_x(a\partial_xu_\nu)\,\cdot\,
\overline{\left(\partial_tu_\nu+\frac{\partial_t\sqrt{a_\veps}}{2\sqrt{a_\veps}}u_\nu\right)}\right)\,dx\,= \\
 & & \qquad\qquad\qquad\qquad\qquad\qquad\qquad\qquad=\,\int2\frac{\partial_x\sqrt{a_\veps}}{a_\veps}\,a\,\,\Re\left(\partial_xu_\nu\,\cdot\,
\overline{\left(\partial_tu_\nu+\frac{\partial_t\sqrt{a_\veps}}{2\sqrt{a_\veps}}u_\nu\right)}\right)\,dx \\
 & & \qquad\qquad\qquad\qquad\qquad\qquad\qquad\qquad-\,\int\frac{\partial_t\sqrt{a_\veps}}{a_\veps}\,a\,|\partial_xu_\nu|^2\,dx \\
 & & \qquad\qquad\qquad\qquad\qquad\qquad\qquad\qquad-\,\int2\frac{a}{\sqrt{a_\veps}}\,\,\Re\left(\partial_xu_\nu\,\cdot\,
\overline{\partial_x\partial_tu_\nu}\right)\,dx \\
 & & \qquad\qquad\qquad\qquad\qquad\qquad\qquad\qquad-\,\int\frac{a}{\sqrt{a_\veps}}\,\,
\partial_x\left(\frac{\partial_t\sqrt{a_\veps}}{\sqrt{a_\veps}}\right)\,
\Re(\partial_xu_\nu\,\cdot\,\overline{u_\nu})\,dx\,;
\end{eqnarray*}
taken care of the fact that $b_0$, $b_1$ and $c$ are real-valued, finally we obtain the complete expression for the time derivative of the approximate
energy:
\begin{eqnarray} \label{(N)eq_dt_e_nu,eps}
 & & \frac{d}{dt}e_{\nu,\veps}(t)\,=\,\int\frac{2}{\sqrt{a_\veps}}\,\,\Re
\left((Lu)_\nu\,\cdot\,
\overline{\left(\partial_tu_\nu+\frac{\partial_t\sqrt{a_\veps}}{2\sqrt{a_\veps}}u_\nu\right)}\right)\,dx \\
 & & \qquad\qquad\quad\,+\,\int\frac{2}{\sqrt{a_\veps}}\,\,\Re\left(R_\veps u_\nu\,\cdot\,
\overline{\left(\partial_tu_\nu+\frac{\partial_t\sqrt{a_\veps}}{2\sqrt{a_\veps}}u_\nu\right)}\right)\,dx \nonumber \\
 & & \qquad\qquad\quad\,+\,\int\partial_t\sqrt{a_\veps}\left(1-\frac{a}{a_\veps}\right)|\partial_xu_\nu|^2\,dx \nonumber \\
 & & \quad\qquad\qquad\,+\,\int2\left(\sqrt{a_\veps}-\frac{a}{\sqrt{a_\veps}}\right)\Re(\partial_xu_\nu\,\cdot\,
\overline{\partial_x\partial_tu_\nu})\,dx \nonumber \\
 & & \qquad\qquad\quad\,+\,\int2\frac{\partial_x\sqrt{a_\veps}}{a_\veps}\,a\,\,\Re\left(\partial_xu_\nu\,\cdot\,
\overline{\left(\partial_tu_\nu+\frac{\partial_t\sqrt{a_\veps}}{2\sqrt{a_\veps}}u_\nu\right)}\right)\,dx \nonumber \\
 & & \qquad\qquad\quad\,-\,
\int\frac{a}{\sqrt{a_\veps}}\,\,\partial_x\left(\frac{\partial_t\sqrt{a_\veps}}{\sqrt{a_\veps}}\right)\,
\Re(\partial_xu_\nu\,\cdot\,\overline{u_\nu})\,dx \nonumber \\
 & & \quad\qquad\qquad\,+\,\int2\,\Re(u_\nu\,\cdot\,\overline{\partial_tu_\nu})\,dx \nonumber \\
 & & \quad\qquad\qquad\,+\,\int\frac{2}{\sqrt{a_\veps}}\,\,\Re\left((\partial_x([\vphi_\nu(D_x),a]\partial_xu))\,\cdot\,
\overline{\left(\partial_tu_\nu+\frac{\partial_t\sqrt{a_\veps}}{2\sqrt{a_\veps}}u_\nu\right)}\right)\,dx \nonumber \\
 & & \qquad\qquad\quad\,-\,\int\frac{2}{\sqrt{a_\veps}}\,b_0(t,x)\,\,\Re\left(\partial_tu_\nu\,\cdot\,
\overline{\left(\partial_tu_\nu+\frac{\partial_t\sqrt{a_\veps}}{2\sqrt{a_\veps}}u_\nu\right)}\right)\,dx \nonumber \\
 & & \qquad\qquad\quad\,-\,\int\frac{2}{\sqrt{a_\veps}}\,\,\Re\left(([\vphi_\nu(D_x),b_0]\partial_tu)\,\cdot\,
\overline{\left(\partial_tu_\nu+\frac{\partial_t\sqrt{a_\veps}}{2\sqrt{a_\veps}}u_\nu\right)}\right)\,dx \nonumber \\
 & & \qquad\qquad\quad\,-\,\int\frac{2}{\sqrt{a_\veps}}\,b_1(t,x)\,\,\Re\left(\partial_xu_\nu\,\cdot\,
\overline{\left(\partial_tu_\nu+\frac{\partial_t\sqrt{a_\veps}}{2\sqrt{a_\veps}}u_\nu\right)}\right)\,dx \nonumber \\
 & & \qquad\qquad\quad\,-\,\int\frac{2}{\sqrt{a_\veps}}\,\,\Re\left(([\vphi_\nu(D_x),b_1]\partial_xu)\,\cdot\,
\overline{\left(\partial_tu_\nu+\frac{\partial_t\sqrt{a_\veps}}{2\sqrt{a_\veps}}u_\nu\right)}\right)\,dx \nonumber \\
 & & \qquad\qquad\quad\,-\,\int\frac{2}{\sqrt{a_\veps}}\,c(t,x)\,\,\Re\left(u_\nu\,\cdot\,
\overline{\left(\partial_tu_\nu+\frac{\partial_t\sqrt{a_\veps}}{2\sqrt{a_\veps}}u_\nu\right)}\right)\,dx \nonumber \\
 & & \qquad\qquad\quad\,-\,\int\frac{2}{\sqrt{a_\veps}}\,\,\Re\left(([\vphi_\nu(D_x),c]u)\,\cdot\,
\overline{\left(\partial_tu_\nu+\frac{\partial_t\sqrt{a_\veps}}{2\sqrt{a_\veps}}u_\nu\right)}\right)\,dx \nonumber\;.
\end{eqnarray}

\subsection{Estimate for the approximate energy}
We want to obtain an energy estimate; so, let us start to control each term of (\ref{(N)eq_dt_e_nu,eps}).\\
Through the rest of the proof, we will denote with $C$, $C'$, $C''$ and $\what{C}$ constants depending only on $\lambda_0$, $\Lambda_0$, $C_0$ and on
the norms of the coefficients of the lower order terms of the operator $L$ in their respective functional spaces, and which are allowed to vary
from line to line.

\subsubsection{Terms with $a$ and $a_\veps$}

Thanks to relations (\ref{(N)eq_condiz_1.1}), (\ref{(N)eq_d_t_a_eps}) with $\sigma=1/2$, (\ref{(N)eq_d_t^2_a_eps}) and Bernstein's inequalities,
we deduce that there exists $C>0$, depending only on $\lambda_0$, $\Lambda_0$ and $C_0$, such that, for all $\nu\in\mbb{N}$,
$$
\left|\int\frac{2}{\sqrt{a_\veps}}\,\,\Re\left(R_\veps u_\nu\,\cdot\,
\overline{\left(\partial_tu_\nu+\frac{\partial_t\sqrt{a_\veps}}{2\sqrt{a_\veps}}u_\nu\right)}
\right)\,dx\right|\,\leq\,C\,\frac{1}{\veps}\,\log\left(\frac{1}{\veps}+1\right)\,
2^{-\nu}\,e_{\nu,\veps}(t)\;.
$$

In the same way, from (\ref{(N)eq_condiz_1.1}), (\ref{(N)eq_a_eps-a}) and (\ref{(N)eq_d_t_a_eps}), we have
$$
\left|\int\partial_t\sqrt{a_\veps}\left(1-\frac{a}{a_\veps}\right)|\partial_xu_\nu|^2\,dx\right|\,\leq\,
C\,\log\left(\frac{1}{\veps}+1\right)\,e_{\nu,\veps}(t)\;,
$$
for a constant $C$ depending again only on $\lambda_0$, $\Lambda_0$ and $C_0$.

Moreover, again from (\ref{(N)eq_condiz_1.1}) and (\ref{(N)eq_a_eps-a}) and Bernstein's inequalities, we  obtain
\begin{eqnarray*}
\left|\int2\left(\sqrt{a_\veps}-\frac{a}{\sqrt{a_\veps}}\right)\Re(\partial_xu_\nu\,\cdot\,
\overline{\partial_x\partial_tu_\nu})\,dx\right| & \leq & C\,\veps\,\log\left(\frac{1}{\veps}+1\right)\,
\|\partial_xu_\nu\|_{L^2}\,\|\partial_x\partial_tu_\nu\|_{L^2} \\
 & \leq & C\,\veps\,\log\left(\frac{1}{\veps}+1\right)\,2^{\nu+1}\,
\|\partial_xu_\nu\|_{L^2}\,\|\partial_tu_\nu\|_{L^2}\;;
\end{eqnarray*}
but we have
$$
\|\partial_tu_\nu\|_{L^2}\,\leq\,\left\|\partial_tu_\nu+\frac{\partial_t\sqrt{a_\veps}}{2\sqrt{a_\veps}}u_\nu
\right\|_{L^2}\,+\,\left\|\frac{\partial_t\sqrt{a_\veps}}{2\sqrt{a_\veps}}u_\nu\right\|_{L^2}
$$
and
\begin{eqnarray*}
\left\|\frac{\partial_t\sqrt{a_\veps}}{2\sqrt{a_\veps}}u_0\right\|_{L^2} & \leq & C\,\veps^{-1/2}\|u_0\|_{L^2} \\
\left\|\frac{\partial_t\sqrt{a_\veps}}{2\sqrt{a_\veps}}u_\nu\right\|_{L^2} & \leq & C\,\veps^{-1/2}\,2^{-\nu}\|u_\nu\|_{L^2}\qquad(\nu\geq1)\;,
\end{eqnarray*}
that give us the following:
$$
\left|\int2\left(\sqrt{a_\veps}-\frac{a}{\sqrt{a_\veps}}\right)\Re(\partial_xu_\nu\cdot
\overline{\partial_x\partial_tu_\nu})dx\right|\,\leq\,
C(\veps\,2^\nu+1)\log\left(\frac{1}{\veps}+1\right)e_{\nu,\veps}(t)\;.
$$

In a very similar way, from (\ref{(N)eq_d_x_a_eps}) one has
$$
\left|\int2\frac{\partial_x\sqrt{a_\veps}}{a_\veps}\,a\,\,\Re\left(\partial_xu_\nu\cdot
\overline{\left(\partial_tu_\nu+\frac{\partial_t\sqrt{a_\veps}}{2\sqrt{a_\veps}}u_\nu\right)}\right)dx\right|
\,\leq\,C\log\left(\frac{1}{\veps}+1\right)e_{\nu,\veps}(t)\;;
$$
moreover, from (\ref{(N)eq_d_t_a_eps}) with $\sigma=1/2$, (\ref{(N)eq_d_x_a_eps}) and (\ref{(N)eq_d_t_d_x_a_eps}) we deduce
$$
\left|\int\frac{a}{\sqrt{a_\veps}}\,\,\partial_x\left(\frac{\partial_t\sqrt{a_\veps}}{\sqrt{a_\veps}}\right)\,
\Re(\partial_xu_\nu\,\cdot\,\overline{u_\nu})\,dx\right|\,\leq\,C\,\frac{1}{\veps}\,
\log\left(\frac{1}{\veps}+1\right)\,2^{-\nu}\,e_{\nu,\veps}(t)\;.
$$

Finally, we have
$$
\left|\int2\,\Re(u_\nu\,\cdot\,\overline{\partial_tu_\nu})\,dx\right|\,\leq\,
C\,\veps^{-1/2}\,2^{-\nu}\,e_{\nu,\veps}(t)\;.
$$


\subsubsection{Terms with $b_0$, $b_1$ and $c$}
Thanks to the hypothesis (\ref{(N)eq_condiz_B})-(\ref{(N)eq_condiz_c}), one has that there exist suitable constants, depending only on $\lambda_0$, $\Lambda_0$, $C_0$
and on the norms of $b_0$ and $b_1$ in the space $L^\infty(\mbb{R}_t;\mc{C}^{\omega}(\mbb{R}_x))$ and of $c$ in
$L^\infty(\mbb{R}_t\times\mbb{R}_x)$, such that
\begin{eqnarray*}
& & \left|\int\frac{2}{\sqrt{a_\veps}}\,b_0(t,x)\,\,\Re\left(\partial_tu_\nu\,\cdot\,
\overline{\left(\partial_tu_\nu+\frac{\partial_t\sqrt{a_\veps}}{2\sqrt{a_\veps}}u_\nu\right)}\right)\,dx\right|\,\leq \\
& & \qquad\qquad\qquad\qquad\qquad\qquad\qquad\leq\,C\,\|\partial_t u_\nu\|_{L^2}\,
\left\|\partial_tu_\nu+\frac{\partial_t\sqrt{a_\veps}}{2\sqrt{a_\veps}}u_\nu\right\|_{L^2} \\
& & \qquad\qquad\qquad\qquad\qquad\qquad\qquad\leq\,(C+C'\,2^{-\nu}\veps^{-1/2})\,e_{\nu,\veps}(t)\;; \\
& & \left|\int\frac{2}{\sqrt{a_\veps}}\,b_1(t,x)\,\,\Re\left(\partial_xu_\nu\,\cdot\,
\overline{\left(\partial_tu_\nu+\frac{\partial_t\sqrt{a_\veps}}{2\sqrt{a_\veps}}u_\nu\right)}\right)\,dx\right|
\,\leq \\
& & \qquad\qquad\qquad\qquad\qquad\qquad\qquad\leq\,C\,\int\sqrt[4]{a_{\veps}}|\partial_x u_\nu|\,
\frac{1}{\sqrt[4]{a_\veps}}\left|\partial_tu_\nu+
\frac{\partial_t\sqrt{a_{_\veps}}}{2\sqrt{a_\veps}}u_\nu\right|\,dx \\
& & \qquad\qquad\qquad\qquad\qquad\qquad\qquad\leq\,2C\,\int\sqrt{a_\veps}|\partial_x u_\nu|^2\,+\,
\frac{1}{\sqrt{a_\veps}}\left|\partial_tu_\nu+\frac{\partial_t\sqrt{a_\veps}}{2\sqrt{a_\veps}}u_\nu
\right|^2\,dx \\
& & \qquad\qquad\qquad\qquad\qquad\qquad\qquad\leq\,2\,e_{\nu,\veps}(t)\;; \\
& & \left|\int\frac{2}{\sqrt{a_\veps}}\,c(t,x)\,\,\Re\left(u_\nu\,\cdot\,
\overline{\left(\partial_tu_\nu+\frac{\partial_t\sqrt{a_\veps}}{2\sqrt{a_\veps}}u_\nu\right)}\right)\,dx\right|\,\leq \\
& & \qquad\qquad\qquad\qquad\qquad\qquad\qquad\leq\,C\,\int|u_\nu|\,\frac{1}{\sqrt[4]{a_\veps}}
\left|\partial_tu_\nu+\frac{\partial_t\sqrt{a_\veps}}{2\sqrt{a_\veps}}u_\nu\right|\,dx \\
& & \qquad\qquad\qquad\qquad\qquad\qquad\qquad\leq\,2C\,e_{\nu,\veps}(t)\;,
\end{eqnarray*}
where we have delt with $\|\partial_tu_\nu\|_{L^2}$ as before.

\bigskip
Now, we join the approximation parameter $\veps$ with the dual variable $\xi$, setting
$$
\veps\,=\,2^{-\nu}\;;
$$
so, from (\ref{(N)eq_dt_e_nu,eps}) and the previous inequalities, we obtain
\begin{eqnarray}
& & \frac{d}{dt}e_{\nu,2^{-\nu}}(t)\,\leq\,\wtilde{C}\,(\nu+1)\,e_{\nu,2^{-\nu}} \label{(N)eq_dt_e_nu,eps_II}\\
& & \qquad\qquad\quad\quad\,+\,\int\frac{2}{\sqrt{a_{2^{-\nu}}}}\,\,\Re\left((Lu)_\nu\,\cdot\,
\overline{\left(\partial_tu_\nu+\frac{\partial_t\sqrt{a_{2^{-\nu}}}}{2\sqrt{a_{2^{-\nu}}}}
u_\nu\right)}\right)\,dx \nonumber \\
& & \qquad\qquad\quad\quad\,+\,\int\frac{2}{\sqrt{a_{2^{-\nu}}}}\,\,\Re\left((\partial_x([\vphi_\nu(D_x),a]\partial_xu))\,\cdot\,
\overline{\left(\partial_tu_\nu+\frac{\partial_t\sqrt{a_{2^{-\nu}}}}{2\sqrt{a_{2^{-\nu}}}}
u_\nu\right)}\right)\,dx \nonumber \\
& & \qquad\qquad\quad\quad\,-\,\int\frac{2}{\sqrt{a_{2^{-\nu}}}}\,\,\Re\left(([\vphi_\nu(D_x),b_0]\partial_tu)\,\cdot\,
\overline{\left(\partial_tu_\nu+\frac{\partial_t\sqrt{a_{2^{-\nu}}}}{2\sqrt{a_{2^{-\nu}}}}
u_\nu\right)}\right)\,dx \nonumber \\
& & \qquad\qquad\quad\quad\,-\,\int\frac{2}{\sqrt{a_{2^{-\nu}}}}\,\,\Re\left(([\vphi_\nu(D_x),b_1]\partial_xu)\,\cdot\,
\overline{\left(\partial_tu_\nu+\frac{\partial_t\sqrt{a_{2^{-\nu}}}}{2\sqrt{a_{2^{-\nu}}}}
u_\nu\right)}\right)\,dx \nonumber \\
& & \qquad\qquad\quad\quad\,-\,\int\frac{2}{\sqrt{a_{2^{-\nu}}}}\,\,\Re\left(([\vphi_\nu(D_x),c]u)\,\cdot\,
\overline{\left(\partial_tu_\nu+\frac{\partial_t\sqrt{a_{2^{-\nu}}}}{2\sqrt{a_{2^{-\nu}}}}
u_\nu\right)}\right)\,dx \nonumber\;,
\end{eqnarray}
for a suitable constant $\wtilde{C}$, which depends only on
$\lambda_0$, $\Lambda_0$, $C_0$ and on the norms of the coefficients of the operator $L$ in their respective functional spaces.

\subsection{Estimates for the commutator terms}
Now, we have to deal with the commutator terms. As we will see, it's useful to consider immediately the sum over $\nu\in\mbb{N}$.

First, we report an elementary lemma (see also~\cite{Colomb-DelSant_2}), which we will use very often in the next calculations.
\begin{lemma} \label{(N)lemma_1}
There exist two continuous, decreasing functions $\alpha_{1}$, $\alpha_2:\,]0,1[\,\rightarrow\,]0,+\infty[$ such that
$\lim_{c\rightarrow0^+}\alpha_j(c)=+\infty$ for $j=1\,,\,2$ and such that, for all $\delta\in]0,1]$ and all $n\geq1$, the following inequalities hold:
$$
\sum_{j=1}^{n}\,e^{\delta j}\,j^{-1/2}\,\leq\,\alpha_1(\delta)\,e^{\delta n}\,n^{-1/2}\,,\qquad\qquad
\sum_{j=n}^{+\infty}\,e^{-\delta j}\,j^{1/2}\,\leq\,\alpha_2(\delta)\,e^{-\delta n}\,n^{1/2}\,.
$$
\end{lemma}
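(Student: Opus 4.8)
The plan is to reduce both inequalities to the convergence of a single tail/partial sum of the type $\sum e^{\pm\delta j}j^{\mp1/2}$ and to extract the functions $\alpha_1,\alpha_2$ from the ratio of consecutive terms. First I would treat the second inequality, which is the cleaner one: set $b_j=e^{-\delta j}j^{1/2}$ and compute $b_{j+1}/b_j=e^{-\delta}(1+1/j)^{1/2}\le e^{-\delta}\sqrt2$ for all $j\ge1$. Hence, for $\delta$ small enough that $q:=e^{-\delta}\sqrt2<1$ — that is, for $\delta>\log\sqrt2=(\log2)/2$ — the series is dominated by a geometric one and $\sum_{j\ge n}b_j\le b_n\sum_{k\ge0}q^k=b_n/(1-q)$, giving the claim with $\alpha_2(\delta)=1/(1-e^{-\delta}\sqrt2)$ on that range of $\delta$; note $\alpha_2$ is continuous and decreasing there. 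For the remaining range $\delta\in(0,(\log2)/2]$ one cannot use this crude bound, so instead I would compare with an integral: since $t\mapsto e^{-\delta t}t^{1/2}$ is eventually decreasing (for $t\ge 1/(2\delta)$), split the tail at $j_0=\lceil 1/(2\delta)\rceil$, bound the finitely many initial terms by $j_0\cdot e^{-\delta n_{\min}}(\dots)$ crudely and the rest by $\int_{n-1}^\infty e^{-\delta t}t^{1/2}\,dt$, which by the substitution $t=s/\delta$ equals $\delta^{-3/2}\int_{\delta(n-1)}^\infty e^{-s}s^{1/2}\,ds\le \delta^{-3/2}\Gamma(3/2)$, uniformly in $n$; absorbing everything into a function of $\delta$ alone (blowing up as $\delta\to0^+$) yields $\alpha_2$ there. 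Taking the max of the two pieces and, if necessary, replacing $\alpha_2$ by a continuous decreasing majorant (e.g. $C/\delta^{3/2}$ with $C$ large) finishes the second estimate; the blow-up $\alpha_2(\delta)\to+\infty$ as $\delta\to0^+$ is forced anyway, since at $\delta=0$ the sum $\sum_{j\ge n}j^{1/2}$ diverges.

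The first inequality is handled in parallel, now reading the ratio in the other direction. With $a_j=e^{\delta j}j^{-1/2}$ one has $a_{j-1}/a_j=e^{-\delta}(1-1/j)^{-1/2}$ for $j\ge2$; this is $\le e^{-\delta}\sqrt2$ once $j\ge2$, so again for $\delta>(\log2)/2$ the partial sum is geometrically dominated by its last term, $\sum_{j=1}^n a_j\le a_n\sum_{k\ge0}(e^{-\delta}\sqrt2)^k=a_n/(1-e^{-\delta}\sqrt2)$, and we may take $\alpha_1=\alpha_2$ on this range. For small $\delta$ the comparison-with-integral argument applies to $t\mapsto e^{\delta t}t^{-1/2}$, which is increasing for $t>1/(2\delta)$: one bounds $\sum_{j\le n}a_j$ by $\int_0^{n}e^{\delta t}t^{-1/2}\,dt$ plus a correction, and the substitution $t=s/\delta$ turns this into $\delta^{-1/2}\int_0^{\delta n}e^{s}s^{-1/2}\,ds$. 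Here the key point is that $\int_0^{X}e^{s}s^{-1/2}\,ds\le 2e^{X}X^{1/2}$ for $X$ bounded away from $0$ — integrate by parts or note $\int_0^X e^s s^{-1/2}ds=\int_0^X (2\sqrt s)' e^s ds\le 2\sqrt X e^X$ — so with $X=\delta n$ the whole thing is $\lesssim \delta^{-1/2}\cdot e^{\delta n}(\delta n)^{1/2}/\sqrt{\cdot}$, i.e. of the form $(\text{function of }\delta)\cdot e^{\delta n}n^{-1/2}$ times a constant, after one checks the small-$n$ case by hand. Again one replaces $\alpha_1$ by a continuous decreasing majorant if needed.

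The only genuinely delicate point — and the step I expect to be the main obstacle — is making the constant depend \emph{only} on $\delta$ and not on $n$ in the regime where $n$ is comparable to or smaller than $1/\delta$ (so that the summand is not yet monotone): there the geometric and the integral comparisons both degrade, and one must either treat the $O(1/\delta)$ initial terms as a block, bounding each by its maximum $\sim e^{1/2}(2\delta)^{1/2}$ (for $\alpha_1$) resp. $\sim e^{-1/2}(2\delta)^{-1/2}$ (for $\alpha_2$) against the target $e^{\pm\delta n}n^{\mp1/2}\ge$ (its value at $n=1$), or simply observe that for $n\le 1/\delta$ one has $e^{\pm\delta n}\in[e^{-1},e]$ and $n^{\mp1/2}$ is controlled, so the finite sum of at most $1/\delta$ terms each $\le$ a power of $1/\delta$ is again a function of $\delta$ alone. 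Collecting the estimates and taking $\alpha_1,\alpha_2$ to be the pointwise maxima of the pieces, then passing to continuous decreasing majorants, gives functions with all the required properties, including $\lim_{\delta\to0^+}\alpha_j(\delta)=+\infty$.
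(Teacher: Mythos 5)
The paper itself does not prove this lemma (it is quoted as elementary, with a pointer to [Colombini--Del Santo]), so your proposal has to stand on its own. Your overall plan --- ratio test for $\delta$ bounded away from $0$, integral comparison plus a separate treatment of the block $n\lesssim 1/\delta$ for small $\delta$ --- is the right one, and the geometric-domination halves (for $\delta>(\log 2)/2$) and the $n\le 1/\delta$ discussion are fine. But the two integral estimates you lean on in the small-$\delta$ regime are not strong enough precisely in the remaining case $n\gtrsim 1/\delta$, which is where the lemma has actual content.

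For $\alpha_2$: you bound the tail by $\int_{n-1}^{\infty}e^{-\delta t}t^{1/2}\,dt\le\delta^{-3/2}\Gamma(3/2)$, ``uniformly in $n$''. A bound uniform in $n$ cannot be absorbed into $\alpha_2(\delta)\,e^{-\delta n}n^{1/2}$, because the right-hand side tends to $0$ as $n\to\infty$ with $\delta$ fixed. What you need is the \emph{incomplete}-gamma tail estimate $\int_{X}^{\infty}e^{-s}s^{1/2}\,ds\le \tfrac32\,e^{-X}X^{1/2}$ for $X\ge1$ (integrate by parts at the lower endpoint: the integral equals $e^{-X}X^{1/2}+\tfrac12\int_X^\infty e^{-s}s^{-1/2}\,ds\le e^{-X}X^{1/2}+\tfrac12 X^{-1/2}e^{-X}$), applied with $X=\delta n$. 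For $\alpha_1$ the defect is worse: your key inequality $\int_0^{X}e^{s}s^{-1/2}\,ds\le 2e^{X}X^{1/2}$ is true but loses a factor of $X$; with $X=\delta n$ it yields $\delta^{-1/2}\cdot 2e^{\delta n}(\delta n)^{1/2}=2\,e^{\delta n}n^{+1/2}$, which is larger than the target $\alpha_1(\delta)\,e^{\delta n}n^{-1/2}$ by a factor of $n$ and cannot be repaired by any choice of $\alpha_1(\delta)$. The correct key fact is $\int_0^{X}e^{s}s^{-1/2}\,ds\le C\,e^{X}X^{-1/2}$ for $X\ge1$ (split at $X/2$: the piece on $[X/2,X]$ is $\le\sqrt2\,X^{-1/2}e^{X}$, the piece on $[0,X/2]$ is $\le\sqrt2\,e^{X/2}X^{1/2}\le C e^{X}X^{-1/2}$), which gives $\alpha_1(\delta)\sim C/\delta$; a quick check at $n\sim1/\delta$ shows this order of growth is in fact necessary. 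Alternatively, you can avoid integrals altogether and make your ratio test work for all $\delta$: since $(1+1/j)^{1/2}\le e^{1/(2j)}\le e^{\delta/2}$ once $j\ge1/\delta$, the consecutive ratios are $\le e^{-\delta/2}$ from $j\sim1/\delta$ onward, so the geometric comparison with ratio $e^{-\delta/2}$ handles all $n\ge1/\delta$ and the initial block is treated as you already do.
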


Before going on, we take $\beta>0$ and $T\in]0,T_0]$ such that
$$
\beta T\,=\,\frac{\theta}{2}\,\log2\;.
$$
\begin{oss} \label{oss:beta_T}
Notice that, thanks to the hypothesis of Theorem \ref{(N)teor_teorema1}, this condition implies that
$$
\beta T\,\leq\,\frac{\omega-\theta}{2}\;.
$$
Moreover, for all $t\in[0,T]$, we have:
\begin{eqnarray*}
\beta t+\frac{\theta}{2}\log2 & \geq & \frac{\theta}{2}\log2\;>0\\
\beta t+\frac{\theta}{2}\log2 & \leq & \theta\log2\;\leq\;\frac{1}{2}\log2\;<1\\
(1-\theta)\log2-\beta t & \geq & \left(1-\frac{3}{2}\theta\right)\log2\;\geq\;\left(1-\frac{3}{4}\right)\log2\;>0\\
(1-\theta)\log2-\beta t & \leq & (1-\theta)\log2\;\leq\;\log2\;<1\,.
\end{eqnarray*}
\end{oss}

Finally, we set (with the same notations used in the subsection \ref{sub-sec:L-P_dec})
$$
\psi_{\mu}\,=\,\vphi_{\mu-1}+\vphi_{\mu}+\vphi_{\mu+1}\qquad(\vphi_{-1}\equiv0)\;.
$$
As $\psi_\mu\equiv1$ on the support of $\vphi_\mu$, we can write
$$
\partial_xu_\mu\,=\,\vphi_\mu(D_x)\partial_xu\,=\,\Psi_\mu(\vphi_\mu(D_x)\partial_xu)\,=\,\Psi_\mu\partial_xu_\mu\;,
$$
where $\Psi_\mu$ is the operator related to $\psi_\mu$. So, given a generic function $f(t,x)$, one has
\begin{equation} \label{(N)eq_nu-mu}
[\vphi_\nu(D_x),f]\partial_xu\,=\,[\vphi_\nu(D_x),f]\left(\sum_{\mu\geq0}\partial_xu_\mu\right)\,=\,
\sum_{\mu\geq0}([\vphi_\nu(D_x),f]\Psi_\mu)\partial_xu_\mu\;.
\end{equation}

After these preliminary remarks, we now can go on with commutators' estimates.

\subsubsection{Term with $[\vphi_\nu(D_x),a]$}

Due to Bernstein's inequalities, we have
$$
\left\|\partial_x\left(\partial_tu_\nu+\frac{\partial_t\sqrt{a_{2^{-\nu}}}}{2\sqrt{a_{2^{-\nu}}}}
u_\nu\right)\right\|_{L^2}\,\leq\,C\,2^\nu\,\left(e_{\nu,2^{-\nu}}(t)\right)^{1/2}\;.
$$
So, using (\ref{(N)eq_nu-mu}) and the fact that $a_\veps$ is real-valued, one has
\begin{eqnarray*}
& & \left|\int\frac{2}{\sqrt{a_{2^{-\nu}}}}\,\,\Re\left(\partial_x([\vphi_\nu(D_x),a]\partial_xu)\,\cdot\,
\overline{\left(\partial_tu_\nu+\frac{\partial_t\sqrt{a_{2^{-\nu}}}}{2\sqrt{a_{2^{-\nu}}}}
u_\nu\right)}\right)\,dx\right|\,\leq\, \\
& & \qquad\qquad\qquad\qquad\qquad\qquad\leq\,C\,\sum_\mu\|([\vphi_\nu(D_x),a]\Psi_\mu)\partial_xu_\mu\|_{L^2}\,
2^\nu\,\left(e_{\nu,2^{-\nu}}(t)\right)^{1/2} \\
& & \qquad\qquad\qquad\qquad\qquad\qquad\leq\,C\,\sum_\mu\|[\vphi_\nu(D_x),a]\Psi_\mu\|_{\mc{L}(L^2)}\,\left(e_{\mu,2^{-\mu}}(t)\right)^{1/2}\,
2^\nu\,\left(e_{\nu,2^{-\nu}}(t)\right)^{1/2}\;,
\end{eqnarray*}
with the constant $C$ which depends only on $\lambda_0$, $\Lambda_0$ and $C_0$.\\
Then,
\begin{eqnarray*}
& & \left|\sum_{\nu\geq0}e^{-2\beta(\nu+1)t}2^{-2\nu\theta}
\int\frac{2}{\sqrt{a_{2^{-\nu}}}}\,\,\Re\left(\partial_x([\vphi_\nu(D_x),a]\partial_xu)
\overline{\left(\partial_tu_\nu+\frac{\partial_t\sqrt{a_{2^{-\nu}}}}{2\sqrt{a_{2^{-\nu}}}}
u_\nu\right)}\right)dx\right|\,\leq \\
& & \;\quad\qquad\leq\,C\,\sum_{\nu,\mu}k_{\nu\mu}\,(\nu+1)^{1/2}e^{-\beta(\nu+1)t}2^{-\nu\theta}
\left(e_{\nu,2^{-\nu}}\right)^{1/2}\,(\mu+1)^{1/2}e^{-\beta(\mu+1)t}2^{-\mu\theta}\left(e_{\mu,2^{-\mu}}\right)^{1/2}\;,
\end{eqnarray*}
where we have set
\begin{equation} \label{(N)eq_kernel_comm[a]}
k_{\nu\mu}\,=\,e^{-(\nu-\mu)\beta t}\,2^{-(\nu-\mu)\theta}\,2^{\nu}\,(\nu+1)^{-1/2}\,(\mu+1)^{-1/2}\,
\|[\vphi_\nu(D_x),a]\Psi_\mu\|_{\mc{L}(L^2)}\;.
\end{equation}
Observe that, if $|\nu-\mu|\geq3$, then $\vphi_\nu\psi_\mu\equiv0$, so $[\vphi_\nu(D_x),a]\Psi_\mu=\vphi_\nu(D_x)[a,\Psi_\mu]$.
Therefore, from lemma \ref{lemma:comm}, in particular from (\ref{eq:comm_Log-Lip}), we deduce that
$$
\|[\vphi_{\nu}(D_x),a(t,x)]\Psi_\mu\|_{\mc{L}(L^2)}\leq
\left\{
\begin{array}{ll}
C\,2^{-\nu}(\nu+1) & \mbox{ if } |\nu-\mu|\leq2\;, \\
 & \\
C\,2^{-max\{\nu,\mu\}}(\max\{\nu,\mu\}+1) & \mbox{ if } |\nu-\mu|\geq3\;,
\end{array}
\right.
$$
where the constant $C$ depends only on $\Lambda_0$ and $C_0$.

Now our aim is to apply Schur's lemma, so to estimate the quantity
\begin{equation} \label{(N)eq_lemma_schur}
\sup_{\mu}\sum_{\nu}|k_{\nu\mu}|\,+\,\sup_{\nu}\sum_{\mu}|k_{\nu\mu}|\;.
\end{equation}
To do this, we will use lemma \ref{(N)lemma_1} and the inequalities stated in remark (\ref{oss:beta_T}).

\begin{enumerate}
 \item Fix $\mu\leq2$.
\begin{eqnarray*}
\sum_{\nu\geq0}|k_{\nu\mu}| & \leq & C\,e^{(\mu+1)\beta t}\,2^{(\mu+1)\theta}\,(\mu+1)^{-\frac{1}{2}}
\sum_\nu e^{-(\nu+1)\beta t}\,2^{-(\nu+1)\theta}\,(\nu+1)^{\frac{1}{2}} \\
& = & C\,e^{(\mu+1)\beta t}\,2^{(\mu+1)\theta}\,(\mu+1)^{-\frac{1}{2}}
\sum_\nu e^{-(\nu+1)(\beta t+\theta\log2)}\,(\nu+1)^{\frac{1}{2}} \\
& \leq & C\,e^{3\beta t}\,2^{3\theta}\,\alpha_2(\beta t+\theta\log2) \\
& \leq & C\,2^{\frac{9}{2}\theta}\,\alpha_2(\theta\log2)\;.
\end{eqnarray*}
 \item Now, take $\mu\geq3$ and first consider
\begin{eqnarray*}
\sum_{\nu=0}^{\mu-3}|k_{\nu\mu}| & \leq & C\,e^{(\mu+1)\beta t}\,2^{-(\mu+1)(1-\theta)}\,(\mu+1)^{\frac{1}{2}}
\sum_{\nu=0}^{\mu-3} e^{-(\nu+1)\beta t}\,2^{(\nu+1)(1-\theta)}\,(\nu+1)^{-\frac{1}{2}} \\
& \leq & C\,e^{(\mu+1)\beta t}\,2^{-(\mu+1)(1-\theta)}\,(\mu+1)^{\frac{1}{2}}
\sum_{\nu=0}^{\mu-3}e^{(\nu+1)(-\beta t+(1-\theta)\log2)}\,(\nu+1)^{-\frac{1}{2}} \\
& \leq & C\,e^{(\mu+1)\beta t}\,2^{-(\mu+1)(1-\theta)}\,(\mu+1)^{\frac{1}{2}}\,
\alpha_1(-\beta t+(1-\theta)\log2)\,\cdot \\
& & \qquad\qquad\qquad\qquad\qquad\qquad\qquad\quad\;\cdot\,
e^{(-\beta t+(1-\theta)\log2)(\mu-2)}\,(\mu-2)^{-\frac{1}{2}} \\
& \leq & C\,2^{\frac{9}{2}\theta}\,\alpha_1\left(\left(1-\frac{3}{2}\theta\right)\log2\right)\;.
\end{eqnarray*}

For the second part of the sum, one has
\begin{eqnarray*}
\sum_{\nu=\mu-2}^{+\infty}|k_{\nu\mu}| & \leq & C\,e^{(\mu+1)\beta t}\,2^{(\mu+1)\theta}\,(\mu+1)^{-1/2}
\sum_{\nu=\mu-2}^{+\infty}e^{-(\nu+1)\beta t}\,2^{-(\nu+1)\theta}\,(\nu+1)^{1/2} \\
& \leq & C\,e^{(\mu+1)\beta t}\,2^{(\mu+1)\theta}\,(\mu+1)^{-1/2}\alpha_2(\beta t+\theta\log2)\,\cdot \\
& & \qquad\qquad\qquad\qquad\qquad\qquad\qquad\qquad\cdot\,e^{-(\beta t+\theta\log2)(\mu-1)}\,(\mu-1)^{1/2} \\
& \leq & C\,2^{\frac{7}{2}\theta}\,\alpha_2(\theta\log2)\;.
\end{eqnarray*}
 \item Fix now $\nu\geq0$; we have
\begin{eqnarray*}
\sum_{\mu=0}^{\nu+2}|k_{\nu\mu}| & \leq & C\,e^{-(\nu+1)\beta t}\,2^{-(\nu+1)\theta}\,(\nu+1)^{1/2}
\sum_{\mu=0}^{\nu+2}e^{(\mu+1)\beta t}\,2^{(\mu+1)\theta}\,(\mu+1)^{-1/2} \\
& \leq & C\,e^{-(\nu+1)\beta t}\,2^{-(\nu+1)\theta}\,(\nu+1)^{1/2}\,\alpha_1(\beta t+\theta\log2)\,\cdot \\
& & \qquad\qquad\qquad\qquad\qquad\qquad\qquad\qquad\cdot\,e^{(\beta t+\theta\log2)(\nu+3)}\,(\nu+3)^{-1/2} \\
& \leq & C\,2^{\frac{7}{2}\theta}\,\alpha_1(\theta\log2)\;.
\end{eqnarray*}
For the second part of the series, the following inequality holds:
\begin{eqnarray*}
\sum_{\mu=\nu+3}^{+\infty}|k_{\nu\mu}| & \leq & C\,e^{-(\nu+1)\beta t}\,2^{(\nu+1)(1-\theta)}\,(\nu+1)^{-\frac{1}{2}}
\sum_{\mu=\nu+3}^{+\infty}e^{(\mu+1)\beta t}\,2^{-(\mu+1)(1-\theta)}\,(\mu+1)^{\frac{1}{2}} \\
& \leq & C\,e^{-(\nu+1)\beta t}\,2^{(\nu+1)(1-\theta)}\,(\nu+1)^{-\frac{1}{2}}\,
\alpha_2(-\beta t+(1-\theta)\log2)\,\cdot \\
& & \qquad\qquad\qquad\qquad\qquad\qquad\qquad\qquad\;\;\cdot\,
e^{(-\beta t+(1-\theta)\log2)(\nu+4)}\,(\nu+4)^{\frac{1}{2}} \\
& \leq & C\,2^{\frac{9}{2}\theta}\,\alpha_2\left(\left(1-\frac{3}{2}\theta\right)\log2\right)\;.
\end{eqnarray*}
\end{enumerate}

In conclusion, there exists a positive function $\varPi$, with $\lim_{\theta\rightarrow0^+}\varPi(\theta)=+\infty$, such that
$$
\sup_{\mu}\sum_{\nu}|k_{\nu\mu}|\,+\,\sup_{\nu}\sum_{\mu}|k_{\nu\mu}|\,\leq\,C\,\varPi(\theta)\;,
$$
and so
\begin{eqnarray*}
& & \hspace{-1cm}\left|\sum_{\nu\geq0}e^{-2\beta(\nu+1)t}\,2^{-2\nu\theta}
\int\frac{2}{\sqrt{a_{2^{-\nu}}}}\,\,\Re\left(\partial_x([\vphi_\nu(D_x),a]\partial_xu)
\overline{\left(\partial_tu_\nu+\frac{\partial_t\sqrt{a_{2^{-\nu}}}}{2\sqrt{a_{2^{-\nu}}}}
u_\nu\right)}\right)dx\right|\,\leq \\
& & \qquad\qquad\qquad\qquad\qquad\qquad\quad\qquad\qquad \leq\,C\,\varPi(\theta)\,
\sum_{\nu=0}^{+\infty}(\nu+1)\,e^{-2\beta(\nu+1)t}\,2^{-2\nu\theta}\,e_{\nu,2^{-\nu}}(t)\;.
\end{eqnarray*}

\subsubsection{Terms with $[\vphi_\nu(D_x),b_0]$ and $[\vphi_\nu(D_x),b_1]$}

Now, let us consider
\begin{eqnarray*}
& & \left|\int\frac{2}{\sqrt{a_{2^{-\nu}}}}\,\,\Re\left([\vphi_{\nu}(D_x),b_0(t,x)]\partial_t u\,
\overline{\left(\partial_tu_\nu+\frac{\partial_t\sqrt{a_{2^{-\nu}}}}{2\sqrt{a_{2^{-\nu}}}}
u_\nu\right)}\right)\,dx\right|\,\leq \\
& & \quad\qquad\qquad\qquad\qquad\qquad\qquad\leq\,2\,\|[\vphi_{\nu}(D_x),b_0(t,x)]\partial_t u\|_{L^2}\,
\left\|\frac{1}{\sqrt{a_{2^{-\nu}}}}\left|\partial_tu_\nu+\frac{\partial_t\sqrt{a_{2^{-\nu}}}}{2\sqrt{a_{2^{-\nu}}}}
u_\nu\right|\;\right\|_{L^2} \\
& & \quad\qquad\qquad\qquad\qquad\qquad\qquad\leq\,2\,\|[\vphi_{\nu}(D_x),b_0(t,x)]\partial_t u\|_{L^2}\,(e_{\nu,2^{-\nu}}(t))^{1/2}\;.
\end{eqnarray*}

Thanks to relation (\ref{(N)eq_nu-mu}), we have
\begin{eqnarray*}
\|[\vphi_{\nu}(D_x),b_0(t,x)]\partial_t u\|_{L^2} & = & 
\left\|[\vphi_{\nu}(D_x),b_0(t,x)]\sum_{\mu\geq0}\Psi_\mu\partial_tu_\mu\right\|_{L^2} \\
 & \leq & \sum_{\mu\geq0}\|[\vphi_{\nu}(D_x),b_0(t,x)]\Psi_\mu\|_{\mc{L}(L^2)}\|\partial_tu_\mu\|_{L^2}\;.
\end{eqnarray*}
As we have done before, we have, for constants depending only on $\lambda_0$, $\Lambda_0$ and $C_0$,
\begin{eqnarray*}
\|\partial_tu_\mu\|_{L^2}\,=\,
\left(\int\frac{\sqrt{a_{2^{-\mu}}}}{\sqrt{a_{2^{-\mu}}}}|\partial_tu_\mu|^{2}dx\right)^{1/2} & \leq & \left(C\,e_{\mu,2^{-\mu}}(t)+
C'\,\left\|\frac{\partial_t\sqrt{a_{2^{-\mu}}}}{2\sqrt{a_{2^{-\mu}}}}u_\mu\right\|_{L^2}\right)^{1/2} \\
& \leq & \left(C e_{\mu,2^{-\mu}}(t)\,+\,C'\frac{1}{2^{-\mu}}2^{-2\mu}\|\partial_x u_\mu\|_{L^2}\right)^{1/2} \\
& \leq & (C+ C'\,2^{-\mu})^{1/2}\left(e_{\mu,2^{-\mu}}\right)^{1/2} \\
& \leq & C\,\left(e_{\mu,2^{-\mu}}\right)^{1/2}\;.
\end{eqnarray*}

Due to lemma \ref{lemma:comm}, we obtain
$$
\|[\vphi_{\nu}(D_x),b_0(t,x)]\Psi_\mu\|_{\mc{L}(L^2)}\leq
\left\{
\begin{array}{ll}
C\,2^{-\nu\omega} & \quad\mbox{ if } |\nu-\mu|\leq2 \\
 & \\
C\,2^{-max\{\mu,\nu\}\omega} & \quad\mbox{ if } |\nu-\mu|\geq3
\end{array}
\right.
$$
where $C$ is a constant depending only on $\|b_0\|_{L^\infty(\mbb{R}_t;\,\mc{C}^{\omega}(\mbb{R}_x))}$.\\
As a matter of fact, the kernel of operator $[\vphi_\nu(D_x),b_0]$ is
$$
n(x,y)\,=\,\what{\vphi}(2^\nu(x-y))\,2^{\nu}\,(b_0(t,x)-b_0(t,y))\;;
$$
so, to evaluate its norm we apply Schur's lemma and, thanks to the fact that $b_0$ is $\omega$-h\"older, we get the desired estimate.

Therefore,
\begin{eqnarray*}
& & \left|\sum_{\nu\geq0}e^{-2\beta t(\nu+1)}2^{-2\nu\theta} \int\frac{2}{\sqrt{a_{2^{-\nu}}}}\,\,\Re\left([\Delta_{\nu},b_0]\partial_tu\,
\overline{\left(\partial_t u_\nu+
\frac{\partial_t\sqrt{a_{2^{-\nu}}}}{2\sqrt{a_{2^{-\nu}}}}u_\nu\right)}\right)dx\right|\,\leq \\
& & \qquad\qquad\qquad\qquad\qquad\qquad\leq\,\sum_{\nu,\mu\geq0}e^{-\beta t(\nu+1)}\,2^{-\nu\theta}\,\left(e_{\nu,2^{-\nu}}\right)^{1/2}\,
e^{-\beta t(\mu+1)}\,2^{-\mu\theta}\,\left(e_{\mu,2^{-\mu}}\right)^{1/2}\,l_{\nu\mu}\;,
\end{eqnarray*}
where we have defined
\begin{equation} \label{(N)eq_kernel_comm[b_0]}
l_{\nu\mu}\,:=\,e^{-(\nu-\mu)\beta t}\,2^{-(\nu-\mu)\theta}\,\|[\vphi_{\nu}(D_x),b_0(t,x)]\Psi_\mu\|_{\mc{L}(L^2)}\;.
\end{equation}

As made before, we are going to estimate $l_{\nu\mu}$ applying Schur's lemma.

\begin{enumerate}
 \item Let us fix $\mu\leq2$. Then
\begin{eqnarray*}
\sum_{\nu\geq0}|l_{\nu\mu}| & \leq & C\,e^{(\mu+1)\beta t}\,2^{(\mu+1)\theta}\,
\sum_{\nu\geq0}e^{-(\nu+1)\beta t}\,2^{-(\nu+1)\theta}\,2^{-\nu\omega} \\
& \leq & C\,e^{3\beta t}\,2^{3\theta}\,\sum_{\nu\geq0}e^{-(\nu+1)(\beta t+\theta\log2)}\,(\nu+1)^{1/2} \\
& \leq & C\,e^{3\beta t}\,2^{3\theta}\,\alpha_2(\beta t+\theta\log2) \\
& \leq & C\,2^{\frac{9}{2}\theta}\,\alpha_2(\theta\log2)\,.
\end{eqnarray*}
 \item Now, take $\mu\geq3$ and consider first
\begin{eqnarray*}
\sum_{\nu=0}^{\mu-3}|l_{\nu\mu}| & \leq & C\,e^{\mu\beta t}\,2^{\mu\theta}\,2^{-\mu\omega}
\sum_{\nu=0}^{\mu-3} e^{-\nu\beta t}\,2^{-\nu\theta} \\
& \leq & C\,e^{\mu(\beta t-(\omega-\theta)\log2)}\,(\mu-2) \\
& \leq & C\,e^{-\mu(\omega-\frac{\theta}{2})\log2}\,(\mu-2) \\
& \leq & C\,M(\omega,\theta)\;,
\end{eqnarray*}
where $M(\omega,\theta)$ is the maximum of the function $z\mapsto e^{-\gamma z}\,(z-2)$, with $\gamma=\left(\omega-\frac{\theta}{2}\right)\log2$.\\
For the second part of the sum, we have
\begin{eqnarray*}
\sum_{\nu=\mu-2}^{+\infty}|l_{\nu\mu}| & \leq & C\,e^{(\mu+1)\beta t}\,2^{(\mu+1)\theta}
\sum_{\nu=\mu-2}^{+\infty}e^{-(\nu+1)\beta t}\,2^{-(\nu+1)\theta}\,2^{-\nu\omega}
\frac{(\nu+1)^{1/2}}{(\nu+1)^{1/2}} \\
& \leq & C\,e^{(\mu+1)\beta t}\,2^{(\mu+1)\theta}\,(\mu-1)^{-1/2}\,\alpha_2(\beta t+\theta\log2)\,\cdot \\
& & \qquad\qquad\qquad\qquad\qquad\qquad\qquad\qquad\cdot\,e^{-(\mu-1)\beta t}\,2^{-(\mu-1)\theta}\,(\mu-1)^{1/2} \\
& \leq &  C\,e^{2(\beta t+\theta\log2)}\,\alpha_2(\theta\log2) \\
& \leq & C\,2^{\frac{9}{2}\theta}\,\alpha_2(\theta\log2)\,.
\end{eqnarray*}
\item Fix now $\nu$. Initially, we have
\begin{eqnarray*}
\sum_{\mu=0}^{\nu+2}|l_{\nu\mu}| & \leq & C\,e^{-(\nu+1)\beta t}\,2^{-(\nu+1)\theta}\,2^{-\nu\omega}
\sum_{\mu=0}^{\nu+2}e^{(\mu+1)(\beta t+\theta\log2)}\,\frac{(\mu+1)^{1/2}}{(\mu+1)^{1/2}} \\
& \leq & C\,e^{-(\nu+1)\beta t}\,2^{-(\nu+1)\theta}\,2^{-\nu\omega}\,(\nu+3)^{1/2}\,\alpha_1(\beta t+\theta\log2)\,\cdot \\
& & \qquad\qquad\qquad\qquad\qquad\qquad\qquad\qquad\cdot\,e^{(\nu+3)\beta t}\,2^{(\nu+3)\theta}\,(\nu+3)^{-1/2} \\
& \leq & C\,e^{2(\beta t+\theta\log2)}\,\alpha_1(\theta\log2) \\
& \leq & C\,2^{3\theta}\,\alpha_1(\theta\log2)\;.
\end{eqnarray*}
Moreover,
\begin{eqnarray*}
\sum_{\mu=\nu+3}^{+\infty}|l_{\nu\mu}| & \leq & C\,e^{-\nu\beta t}\,2^{-\nu\theta}
\sum_{\mu=\nu+3}^{+\infty}e^{-\mu(-\beta t+(\omega-\theta)\log2)}\,\frac{\mu^{1/2}}{\mu^{1/2}} \\
& \leq & C\,e^{-\nu\beta t}\,2^{-\nu\theta}\,(\nu+3)^{-1/2}\,\alpha_2((\omega-\theta)\log2-\beta t)\,\cdot \\
& & \qquad\qquad\qquad\qquad\qquad\qquad\qquad\cdot\,e^{(\nu+3)\beta t}\,2^{-(\nu+3)(\omega-\theta)}\,(\nu+3)^{1/2}\\
& \leq & C\,e^{3\beta t}\,2^{3\theta}\,2^{-(\nu+3)\omega}\,\alpha_2\left(\left(\omega-\frac{3}{2}\theta\right)\log2\right) \\
& \leq & C\,2^{\frac{9}{2}\theta}\,\alpha_2\left(\left(\omega-\frac{3}{2}\theta\right)\log2\right)\,.
\end{eqnarray*}
\end{enumerate}

From all these inequalities, thanks to Schur's lemma, one has that there exists a constant $\wtilde{M}(\omega,\theta)$,
depending only on $\|b_0\|_{L^\infty(\mbb{R}_t;\,\mc{C}^{\omega}(\mbb{R}_x))}$ and on the parameter $\theta$ (which we have fixed at the beginning of
the calculations), such that
$$
\sup_{\mu}\sum_{\nu}|l_{\nu\mu}|\,+\,\sup_{\nu}\sum_{\mu}|l_{\nu\mu}|\,\leq\,C\,\wtilde{M}(\omega,\theta)\;;
$$
from this relation, finally we get
\begin{eqnarray*}
& & \hspace{-0.5cm}\left|\sum_{\nu\geq0}e^{-2\beta t(\nu+1)}2^{-2\nu\theta} \int\frac{2}{\sqrt{a_{2^{-\nu}}}}\,\,
\Re\left([\vphi_{\nu}(D_x),b_0]\partial_t u\,
\overline{\left(\partial_t u_\nu+\frac{\partial_t\sqrt{a_{2^{-\nu}}}}{2\sqrt{a_{2^{-\nu}}}}u_\nu\right)}\right)dx\right|\,\leq \\
& & \qquad\qquad\qquad\qquad\qquad\qquad\qquad\qquad\qquad\leq\,C\,\wtilde{M}(\omega,\theta)
\sum_{\nu=0}^{+\infty}(\nu+1)\,e^{-2\beta(\nu+1)t}\,2^{-2\nu\theta}\,e_{\nu,2^{-\nu}}(t)\;.
\end{eqnarray*}

With regard to the term with the commutator $[\vphi_\nu(D_x),b_1(t,x)]$, notice that, as we made before, one has, for constants depending
only on $\lambda_0$, $\Lambda_0$ and $C_0$,
\begin{eqnarray*}
& & \left|\int\frac{2}{\sqrt{a_{2^{-\nu}}}}\,\,\Re\left([\vphi_{\nu}(D_x),b_1]\partial_xu\,
\overline{\left(\partial_tu_\nu+\frac{\partial_t\sqrt{a_{2^{-\nu}}}}{2\sqrt{a_{2^{-\nu}}}}u_\nu\right)}
\right)\,dx\right|\,\leq \\
& & \qquad\qquad\leq\,C\,\sum_{\mu\geq0}\|[\vphi_\nu(D_x),b_1]\Psi_\mu\|_{\mc{L}(L^2)}\,
\|\sqrt[4]{a_{2^{-\nu}}}\partial_xu_\mu\|_{L^2}\,\left\|\frac{1}{\sqrt[4]{a_{2^{-\nu}}}}
\left(\partial_tu_\nu+\frac{\partial_t\sqrt{a_{2^{-\nu}}}}{2\sqrt{a_{2^{-\nu}}}}u_\nu\right)\right\|_{L^2} \\
& & \qquad\qquad\leq\,C\,\sum_{\mu\geq0}\|[\vphi_\nu(D_x),b_1]\Psi_\mu\|_{\mc{L}(L^2)}\,\left(e_{\nu,2^{-\nu}}\right)^{1/2}\left(e_{\mu,2^{-\mu}}\right)^{1/2}\;.
\end{eqnarray*}

Therefore,
\begin{eqnarray*}
& & \left|\sum_{\nu\geq0}e^{-2\beta t(\nu+1)}\,2^{-2\nu\theta}\,\int\frac{2}{\sqrt{a_{2^{-\nu}}}}\,\,\Re\left([\vphi_{\nu}(D_x),b_1]\partial_xu\,
\overline{\left(\partial_tu_\nu+\frac{\partial_t\sqrt{a_{2^{-\nu}}}}{2\sqrt{a_{2^{-\nu}}}}u_\nu\right)}
\right)dx\right|\,\leq\\
& & \qquad\qquad\qquad\qquad\qquad\leq\,C\,\sum_{\nu,\mu\geq0}e^{-\beta(\nu+1)t}\,2^{-\nu\theta}\,\left(e_{\nu,2^{-\nu}}\right)^{1/2}\,
e^{-\beta(\mu+1)t}\,2^{-\mu\theta}\,\left(e_{\mu,2^{-\mu}}\right)^{1/2}\,h_{\nu\mu}\;,
\end{eqnarray*}
where we have set again
$$
h_{\nu\mu}\,=\,e^{-(\nu-\mu)\beta t}\,2^{-(\nu-\mu)\theta}\,\|[\vphi_\nu(D_x),b_1]\Psi_\mu\|_{\mc{L}(L^2)}\;.
$$
As $b_0$ and $b_1$ satisfy to the same hypothesis, the commutator $[\vphi_\nu(D_x),b_1]$ verifies the same inequalities as $[\vphi_\nu(D_x),b_0]$;
so, if we repeat the same calculations, we obtain
\begin{eqnarray*}
& & \left|\sum_{\nu\geq0}e^{-2\beta t(\nu+1)}\,2^{-2\nu\theta}\,\int\frac{2}{\sqrt{a_{2^{-\nu}}}}\,\,\Re\left([\vphi_{\nu}(D_x),b_1]\partial_xu\,
\overline{\left(\partial_tu_\nu+\frac{\partial_t\sqrt{a_{2^{-\nu}}}}{2\sqrt{a_{2^{-\nu}}}}u_\nu\right)}
\right)dx\right|\,\leq \\
& & \qquad\qquad\qquad\qquad\qquad\qquad\qquad\qquad\qquad\leq\,C\,\wtilde{M}(\omega,\theta)
\sum_{\nu=0}^{+\infty}(\nu+1)\,e^{-2\beta(\nu+1)t}\,2^{-2\nu\theta}\,e_{\nu,2^{-\nu}}(t)\;.
\end{eqnarray*}

\subsubsection{Term with $[\vphi_\nu(D_x),c]$}

Finally, we have to deal with the commutator $[\vphi_\nu(D_x),c(t,x)]$.

First, observe that there exist constants, depending only on $\lambda_0$, $\Lambda_0$ and $C_0$ as usual, such that one has
\begin{eqnarray*}
& & \left|\int\frac{2}{\sqrt{a_{2^{-\nu}}}}\,\,\Re\left([\vphi_{\nu}(D_x),c]\partial_xu\,
\overline{\left(\partial_tu_\nu+\frac{\partial_t\sqrt{a_{2^{-\nu}}}}{2\sqrt{a_{2^{-\nu}}}}u_\nu\right)}
\right)\,dx\right|\,\leq \\
& & \qquad\qquad\qquad\qquad\qquad\leq\,C\,\|[\vphi_{\nu}(D_x),c]u\|_{L^2}\,\left\|\frac{1}{\sqrt[4]{a_{2^{-\nu}}}}\left(\partial_tu_\nu+
\frac{\partial_t\sqrt{a_{2^{-\nu}}}}{2\sqrt{a_{2^{-\nu}}}}u_\nu\right)\right\|_{L^2} \\
& & \qquad\qquad\qquad\qquad\qquad\leq\,C\,\sum_{\mu\geq0}\|[\vphi_\nu(D_x),c]\Psi_\mu\|_{\mc{L}(L^2)}\,\|u_\mu\|_{L^2}\,
\left(e_{\nu,2^{-\nu}}(t)\right)^{1/2}\\
& & \qquad\qquad\qquad\qquad\qquad\leq\,2\,C\,\sum_{\mu\geq0}\|[\vphi_\nu(D_x),c]\Psi_\mu\|_{\mc{L}(L^2)}\,
2^{-\mu}\,\|D_xu_\mu\|_{L^2}\,\left(e_{\nu,2^{-\nu}}(t)\right)^{1/2}\\
& & \qquad\qquad\qquad\qquad\qquad\leq\,2\,C\,\sum_{\mu\geq0}\|[\vphi_\nu(D_x),c]\Psi_\mu\|_{\mc{L}(L^2)}\,2^{-\mu}\,
\left(e_{\mu,2^{-\mu}}(t)\right)^{1/2}\,\left(e_{\nu,2^{-\nu}}(t)\right)^{1/2}\;.
\end{eqnarray*}

Thereby, we get the estimate
\begin{eqnarray*}
& & \left|\sum_{\nu\geq0}e^{-2\beta t(\nu+1)}\,2^{-2\nu\theta}\,\int\frac{2}{\sqrt{a_{2^{-\nu}}}}\,\,\Re\left([\vphi_{\nu}(D_x),c]\partial_xu
\overline{\left(\partial_tu_\nu+\frac{\partial_t\sqrt{a_{2^{-\nu}}}}{2\sqrt{a_{2^{-\nu}}}}u_\nu\right)}
\right)dx\right|\,\leq \\
& & \qquad\qquad\qquad\leq\,2\,C\sum_{\nu,\mu\geq0}e^{-\beta t(\nu+1)}\,2^{-\nu\theta}\,\left(e_{\nu,2^{-\nu}}(t)\right)^{1/2}\,
e^{-\beta t(\mu+1)}\,2^{-\mu\theta}\,\left(e_{\mu,2^{-\mu}}(t)\right)^{1/2}\,m_{\nu\mu}\;,
\end{eqnarray*}
where we have defined
$$
m_{\nu\mu}\,=\,e^{-(\nu-\mu)\beta t}\,2^{-(\nu-\mu)\theta}\,2^{-\mu}\,\|[\vphi_\nu(D_x),c]\Psi_\mu\|_{\mc{L}(L^2)}\;.
$$

The kernel of the operator $[\vphi_\nu(D_x),c]$ is
$$
n'(x,y)\,=\,\what{\psi}(2^\nu(x-y))\,2^{\nu}\,(c(t,y)-c(t,x))\,;
$$
so, remembering that $c$ is bounded over $\mbb{R}\times\mbb{R}$, from Schur's lemma one gets
$$
\|[\vphi_\nu(D_x),c]\|_{\mc{L}(L^2)}\,\leq\,C\qquad\qquad\forall\,\nu\geq0\,,
$$
where the constant $C$ depends only on $\|c\|_{L^\infty(\mbb{R}_t\times\mbb{R}_x)}$.

Again, we are going to estimate the kernel $m_{\nu\mu}$ to apply Schur's lemma.
\begin{enumerate}
 \item First, we take $\mu\leq2$ and we have
\begin{eqnarray*}
\sum_{\nu\geq0}|m_{\nu\mu}| & \leq &  C\,e^{(\mu+1)\beta t}\,2^{(\mu+1)\theta}\,2^{-\mu}
\sum_{\nu\geq0}e^{-(\nu+1)\beta t}\,2^{-(\nu+1)\theta} \\
& \leq & C\,e^{3\beta t}\,2^{3\theta}\,\sum_{\nu\geq0}e^{-(\nu+1)(\beta t+\theta\log2)}\,(\nu+1)^{1/2} \\
& \leq & C\,e^{3\beta t}\,2^{3\theta}\,\alpha_2(\beta t+\theta\log2) \\
& \leq & C\,2^{\frac{9}{2}\theta}\,\alpha_2(\theta\log2)\;.
\end{eqnarray*}
 \item Now, we fix $\mu\geq3$ and we consider the first part of the series:
\begin{eqnarray*}
\sum_{\nu=0}^{\mu-3}|m_{\nu\mu}| & \leq & C\,e^{(\mu+1)\beta t}\,2^{-(\mu+1)(1-\theta)}\,2^{-\mu}\,
\sum_{\nu=0}^{\mu-3} e^{-(\nu+1)\beta t}\,2^{(\nu+1)(1-\theta)}\,2^{\mu-\nu}\frac{(\nu+1)^{1/2}}{(\nu+1)^{1/2}} \\
& \leq & C\,e^{(\mu+1)\beta t}\,2^{-(\mu+1)(1-\theta)}\,(\mu-2)^{1/2}\,
\sum_{\nu=0}^{\mu-3} e^{(\nu+1)((1-\theta)\log2-\beta t)}\,(\nu+1)^{-1/2} \\
& \leq & C\,e^{(\mu+1)\beta t}\,2^{-(\mu+1)(1-\theta)}\,(\mu-2)^{1/2}\,
\alpha_1((1-\theta)\log2-\beta t)\,\cdot \\
& & \qquad\qquad\qquad\qquad\qquad\qquad\qquad\qquad\cdot\,e^{-(\mu-2)\beta t}\,2^{(\mu-2)(1-\theta)}\,(\mu-2)^{-1/2} \\
& \leq & C\,e^{3\beta t}\,2^{3\theta}\,\alpha_1\left(\left(1-\frac{3}{2}\theta\right)\log2\right) \\
& \leq & C\,2^{\frac{9}{2}\theta}\,\alpha_1\left(\left(1-\frac{3}{2}\theta\right)\log2\right)\;.
\end{eqnarray*}
For the second part, one has:
\begin{eqnarray*}
\sum_{\nu=\mu-2}^{+\infty}|m_{\nu\mu}| & \leq &  C\,e^{(\mu+1)\beta t}\,2^{(\mu+1)\theta}\,2^{-\mu}
\sum_{\nu=\mu-2}^{+\infty}e^{-(\nu+1)\beta t}\,2^{-(\nu+1)\theta}\frac{(\nu+1)^{1/2}}{(\nu+1)^{1/2}} \\
& \leq & C\,e^{(\mu+1)\beta t}\,2^{(\mu+1)\theta}\,2^{-\mu}\,\alpha_2(\beta t+\theta\log2)\,
e^{-(\mu-1)\beta t}\,2^{-(\mu-1)\theta} \\
& \leq & C\,2^{3\theta}\,\alpha_2(\theta\log2)\;.
\end{eqnarray*}
 \item Now, we fix $\nu\geq0$. Initially, we consider
\begin{eqnarray*}
\sum_{\mu=0}^{\nu+2}|m_{\nu\mu}| & \leq & C\,e^{-(\nu+1)\beta t}\,2^{-(\nu+1)\theta}\,
\sum_{\mu=0}^{\nu+2}e^{(\mu+1)(\beta t+\theta\log2)}\,2^{-\mu}\,\frac{(\mu+1)^{1/2}}{(\mu+1)^{1/2}} \\
& \leq & C\,e^{-(\nu+1)\beta t}\,2^{-(\nu+1)\theta}\,(\nu+3)^{1/2}\,\alpha_1(\beta t+\theta\log2)\,\cdot \\
& & \qquad\qquad\qquad\qquad\qquad\qquad\cdot\,e^{(\nu+3)\beta t}\,2^{(\nu+3)\theta}\,(\nu+3)^{-1/2} \\
& \leq & C\,e^{2(\beta t+\theta\log2)}\,\alpha_1(\theta\log2) \\
& \leq & C\,2^{3\theta}\,\alpha_1(\theta\log2)\;.
\end{eqnarray*}
The second part of the series, instead, can be treated as follow:
\begin{eqnarray*}
\sum_{\mu=\nu+3}^{+\infty}|m_{\nu\mu}| & \leq & C\,e^{-(\nu+1)\beta t}\,2^{(\nu+1)(1-\theta)}\,
\sum_{\mu=\nu+3}^{+\infty}e^{(\mu+1)\beta t}\,2^{-\mu}\,2^{-(\mu+1)(1-\theta)}\,2^{\mu-\nu}\,
\frac{(\mu+1)^{\frac{1}{2}}}{(\mu+1)^{\frac{1}{2}}} \\
 & \leq & C\,e^{-(\nu+1)\beta t}\,2^{(\nu+1)(1-\theta)}\,(\nu+4)^{-\frac{1}{2}}\,
\sum_{\mu=\nu+3}^{+\infty}e^{-(\mu+1)((1-\theta)\log2-\beta t)}\,(\mu+1)^{\frac{1}{2}} \\
& \leq & C\,e^{-(\nu+1)\beta t}\,2^{(\nu+1)(1-\theta)}\,\alpha_2((1-\theta)\log2-\beta t)\,e^{(\nu+4)\beta t}\,2^{-(\nu+4)(1-\theta)} \\
& \leq & C\,2^{\frac{9}{2}\theta}\,\alpha_2\left(\left(1-\frac{3}{2}\theta\right)\log2\right)\;.
\end{eqnarray*}
\end{enumerate}
Finally, we obtain:
\begin{eqnarray*}
& & \left|\sum_{\nu\geq0}e^{-2\beta t(\nu+1)}\,2^{-2\nu\theta}\,\int\frac{2}{\sqrt{a_{2^{-\nu}}}}\,\,\Re\left([\vphi_{\nu}(D_x),c]\partial_xu\,
\overline{\left(\partial_tu_\nu+\frac{\partial_t\sqrt{a_{2^{-\nu}}}}{2\sqrt{a_{2^{-\nu}}}}u_\nu\right)}
\right)dx\right|\,\leq \\
& & \qquad\qquad\qquad\qquad\qquad\qquad\qquad\qquad\qquad\leq\,C\,\varPi(\theta)\,
\sum_{\nu=0}^{+\infty}(\nu+1)\,e^{-2\beta(\nu+1)t}\,2^{-2\nu\theta}\,e_{\nu,2^{-\nu}}(t)\;,
\end{eqnarray*}
where the function $\varPi$ is the same used in the estimate of the term $[\vphi_\nu(D_x),a]$.

\subsection{End of the proof of theorem \ref{(N)teor_teorema1}}

Now we can complete the proof of theorem \ref{(N)teor_teorema1}.

First, remembering the definition of total energy given by (\ref{(N)eq_E_tot}), we have that there exists a constant $C>0$, depending only on
$\theta$, such that
$$
\left|\sum_{\nu=0}^{+\infty}e^{-\beta(\nu+1)t}\,2^{-2\nu\theta}\,\int\frac{2}{\sqrt{a_\veps}}\,\,\Re
\left((Lu)_\nu\,\cdot\,\overline{\left(\partial_tu_\nu+
\frac{\partial_t\sqrt{a_\veps}}{2\sqrt{a_\veps}}u_\nu\right)}\right)\,dx\right|\,\leq\,C\,(E(t))^{1/2}\,\|Lu\|_{H^{-\theta-\beta^*t}}\,.
$$
Therefore, if we set $\wtilde{\varPi}(\omega,\theta)=\max\{\wtilde{M}(\omega,\theta),\varPi(\theta)\}$, from relation (\ref{(N)eq_dt_e_nu,eps_II})
and from the estimates proved in the previous subsection, we have that, for suitable constants, depending only on $\lambda_0$, $\Lambda_0$, $C_0$
and on the norms of the coefficients of operator $L$ in their respective functional spaces, the following inequality holds:
\begin{eqnarray*}
\frac{d}{dt}E(t) & \leq & \left(C+C'\,\wtilde{\varPi}(\omega,\theta)-2\beta\right)\,
\sum_{\nu=0}^{+\infty}\,(\nu+1)\,e^{-2\beta(\nu+1)t}\,2^{-2\nu\theta}\,e_{\nu,2^{-\nu}}(t) \\
 & + & C''\,(E(t))^{1/2}\,\|Lu\|_{H^{-\theta-\beta^*t}}\,.
\end{eqnarray*}
Now, let us fix $\beta$ large enough, such that $C+C'\,\wtilde{\varPi}(\omega,\theta)-2\beta\leq0$: we can always do this, on condition that we take
$T$ small enough. With this choice, we have
$$
\frac{d}{dt}E(t)\,\leq\,C''\,(E(t))^{1/2}\,\|Lu\|_{H^{-\theta-\beta^*t}}\;;
$$
now, the thesis of the theorem follows from Gronwall's lemma and remark \ref{remark:E(0)-E(t)}.


\end{document}